\newtheorem{theorem}{Theorem}
\newtheorem{lemma}{Lemma}
\newtheorem{proposition}{Proposition}
\newtheorem{remark}{Remark}
\theoremstyle{definition}
\newtheorem{assumption}{Assumption}
\newcommand{\osc}[1]{{\underset{#1}{\text{osc}\,}}}
\newcommand{\diag}{\mathrm{diag}}
\newcommand{\dist}{\operatorname{dist}}
\newcommand{\e}{\varepsilon}
\newcommand{\ee}{\mathbf{e}}
\newcommand{\Gm}{\Gamma}
\newcommand{\R}{\mathbb{R}}
\newcommand{\Z}{\mathbb{Z}}
\newcommand{\N}{\mathbb{N}}
\newcommand{\K}{\mathcal{K}}
\renewcommand{\d}{\text{d}}
\renewcommand{\div}{\mathrm{div}\,}
\renewcommand{\L}{\mathbb{L}}
\newcommand{\A}{\mathcal{A}}
\newcommand{\Lip}{\mathrm{Lip}\,}
\newcommand{\M}{\mathcal{M}}
\newcommand{\id}{\mathrm{Id}}
\newcommand\sgn{\mathrm{sgn}}
\renewcommand{\H}{\mathcal{H}}
\newcommand{\di}{d_{4\pi\Z}}
\renewcommand{\L}{{\mathcal L}}
\title{Energy scaling law for the regular cone}
\date{\today}
\author[H. Olbermann] {Heiner Olbermann}
\date{\today}
\address[Heiner Olbermann]{Hausdorff Center for Mathematics \& Institute for Applied Mathematics, Bonn, Germany}
\email{heiner.olbermann@hcm.uni-bonn.de}
\begin{document}
\maketitle

\begin{abstract}
\noindent
{ We} consider a thin elastic sheet in the shape of a disk whose reference metric is that of a singular cone. I.e., the reference metric  is flat  away from the
center and has a defect there. {We define a  geometrically fully nonlinear free elastic energy, and investigate the scaling behavior of this energy as the thickness $h$ tends to 0.}
We work with two simplifying assumptions: Firstly, we think of the deformed sheet as an immersed 2-dimensional Riemannian manifold in Euclidean 3-space and assume that the exponential map at
the origin (the center of the sheet) supplies a coordinate chart for the whole manifold. Secondly, the  energy functional  penalizes the difference  between the induced metric  and the reference metric in $L^\infty$ (instead of, as is usual, in $L^2$).
 Under these assumptions, we 
show that the elastic energy per unit thickness of the regular cone in the
leading order of $h$ is given by $C^*h^2|\log h|$, where the value of $C^*$ is given explicitly. 
\end{abstract}

\section{Introduction}
{
\subsection{Statement of the result}
The present contribution is concerned with the shape that one obtains when
removing a sector from a thin elastic sheet in the shape of a disc, and gluing
the edges of the cut back together. We will call this setup the ``regular
cone''. It  has previously been considered, e.g., to describe the buckling
behavior of  membrane networks near  disclinations, see
\cite{PhysRevA.38.1005}. In this reference, one can find ansatz-based discussions and numerical
simulations    of the regular cone. For  more recent numerical simulations, see
 \cite{2006PhRvE..73d6604L}. In \cite{MR3148079}, minimizers of the free elastic energy have been described in some detail under the assumption of small deflections and rotational symmetry. The latter is a particularly strong assumption on the shape of (almost-) minimizers. In the present work, this assumption will be removed.\\}
\\
In a  simplified setting, the elastic energy of a thin  sheet consists of two
terms, ``membrane'' and {``bending''} energy; the former is non-convex, and the
latter can be viewed as a singular perturbation.  By the non-convexity of the
energy, there can be many local minimizers. One approach to a rigorous study of
the problem, in the context of nonlinear elasticity first used in
\cite{MR1272383,MR1293775}, is to establish  upper and lower bounds for the
minimal energy in terms of the singular perturbation parameter. This is the
approach we will take in the present paper. The perturbation parameter will be
the sheet thickness $h$. \\
{
We will define a free elastic energy for the regular cone and construct upper and lower bounds for the free elastic energy. The upper and lower bound will be identical to leading order in $h$. Our method of proof is based on viewing the elastic sheet as an immersed Riemannian manifold and analyzing intrinsically defined geometric objects, such as the Gauss curvature of the manifold. For our method to work, however, we are led to consider a non-standard free energy functional, and to restrict the set of allowed geometries of the deformed sheet.\\
\\
The  idea is to consider integrals of the Gauss curvature as the fundamental objects that are controlled by both the membrane and the bending energy (in different function spaces). The integral of Gauss curvature over some suitably smooth set $U$  determines the (oriented) volume of the push forward of $U$ under the Gauss map, $\nu(U)$. Via the isoperimetric inequality on the sphere, this can be translated into a lower bound on  on $\|D\nu\|_{L^1(\partial U)}$, which will be enough to obtain lower bounds for the bending energy.\\
\\
For this idea to work, we need to be able to control the Gauss curvature by the
membrane energy in some suitable function space. It is a well known fact from
Riemannian geometry (already known to Gauss himself) that the Gauss curvature
does indeed fully determine the metric (see Lemma \ref{lem:gform}). However,
this dependence only becomes explicit in a certain set of coordinates, the so
called \emph{geodesic polar coordinates}. In order to fully control the metric via the
Gauss curvature on the whole manifold, we will have to assume that the geodesic
polar coordinates supply a coordinate chart for the whole manifold. This is
equivalent with requiring that there exist no no conjugate points to the origin
(which is, roughly speaking, the same as requiring the non-existence of geodesics emanating from the origin that intersect elsewhere).\\
\\
To make this assumption precise, we introduce the notation $B_\rho:=\{x\in \R^2:|x|<\rho\}$ for $\rho>0$, and we let $y\in C^\infty(B_1; \R^3)$ be an immersion, i.e., $\mathrm{rank }\, Dy=2$ everywhere in $B_1$. This map induces a metric $y^*\ee^{(3)}$ on $B_1$, where $y^*$ denotes the pull-back under $y$, and $\ee^{(3)}$ denotes the Euclidean metric on $\R^3$. Now $(B_1,y^*\ee^{(3)})$ is a Riemannian manifold. We denote the tangent space at $x\in B_1$ by $T_xB_1$. With this terminology, our working assumption can be formulated as follows:
\begin{assumption}
\label{ass1}
For the Riemannian manifold $(B_1, y^*\ee^3)$,  the exponential map at the
origin $\exp_0$ supplies a diffeomorphism of some subset $N_y$ of $T_0B_1$ to
$B_1$. 
\end{assumption}
Let $\A$ denote  the set of immersions $y\in C^\infty(B_1;\R^3)$ that satisfy
Assumption \ref{ass1}.\\
We write $g_y=y^*\ee^{(3)}$ for the induced metric, and set the
reference metric $g_0$ to be that of a singular cone,
\begin{equation}
g_0=\d\rho\otimes \d\rho+m_0^2\rho^2 \d\vartheta\otimes \d\vartheta\,\,,\label{eq:14}
\end{equation}
where $\rho,\vartheta$ are the polar coordinates on $B_1$ and $m_0\in(0,1)$.
Furthermore, we
 denote by $\nu_y$ the
surface normal
$\partial_1y\wedge \partial_2 y/|\partial_1y\wedge \partial_2 y|$. 
With this notation in place, we are  ready to 
define the
free elastic energy $I_h^\infty:C^\infty(B_1;\R^3)\to \R$ by
\begin{equation}
\begin{split}
  I_h^\infty(y)= &E_{\text{membrane}}^{h,\infty}+E_{\text{bending}}\\
  =&\|g_y-g_0\|_{L^\infty(B_1\setminus
    B_h)}^2+h^2\|D\nu_y\|_{L^2(B_1)}^2\,.
\end{split}\label{eq:17}
\end{equation}
(For a precise definition of the norm on the space of metrics that we are using,
see Section \ref{sec:notat-prel}.)
Furthermore we define the constant
\newcommand{\Cs}{C^*}
\[
\Cs=2\pi\left(1-m_0^2\right)\,.
\]
 In this contribution, we prove
\begin{theorem}
\label{thm:mainthm}
There exists a  constant $C>0$ depending only on $m_0$   with 
the following property: For all $h<\mathrm{e}^{-1}$,
\[
\Cs |\log h|-\frac32\log|\log h|-C\leq h^{-2}\inf_{y\in\A} I_h^\infty(y) \leq \Cs |\log h|+C
\,.
\]
\end{theorem}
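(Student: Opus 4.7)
\emph{Upper bound.} Exhibit an explicit test immersion $y_h\in\A$. On $B_1\setminus B_h$ take the standard cone
\[
y_h(\rho,\vartheta)=(m_0\rho\cos\vartheta,\,m_0\rho\sin\vartheta,\,\sqrt{1-m_0^2}\,\rho),
\]
which is a smooth isometric embedding of $(B_1\setminus B_h, g_0)$ and hence satisfies $\|g_{y_h}-g_0\|_{L^\infty(B_1\setminus B_h)}=0$. Inside $B_h$ interpolate smoothly to a spherical cap of radius $\asymp h$ chosen so that $y_h\in C^\infty(B_1;\R^3)$ and $\exp_0$ is a diffeomorphism (yielding $y_h\in\A$). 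A direct calculation gives $|D\nu_{y_h}|^2=(1-m_0^2)/\rho^2$ on the conical part, so $\int_{B_1\setminus B_h}|D\nu_{y_h}|^2\,dx=\Cs|\log h|$, while $|D\nu_{y_h}|^2=O(h^{-2})$ on the cap of area $O(h^2)$, contributing $O(1)$. Multiplying by $h^2$ yields $I_h^\infty(y_h)\leq \Cs h^2|\log h|+Ch^2$.

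\emph{Lower bound: setup and change of chart.} Assume w.l.o.g.\ $I_h^\infty(y)\leq 2\Cs h^2|\log h|$, so $\|g_y-g_0\|_{L^\infty(B_1\setminus B_h)}\lesssim h\sqrt{|\log h|}$. By Assumption \ref{ass1} and Lemma \ref{lem:gform}, in geodesic polar coordinates $(r,\vartheta)$ centered at $0$,
\[
g_y=dr\otimes dr+f^2\,d\vartheta\otimes d\vartheta,\qquad f(0,\vartheta)=0,\quad\partial_rf(0,\vartheta)=1,\quad K_y=-\partial_r^2f/f.
\]
The crucial first step is to transfer the membrane smallness from the Euclidean chart on $B_1$ (where $g_0$ is simple) to the geodesic polar chart (where $K_y$ is controlled by $f$). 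Working out this transfer on a ``good'' annulus $A=\{r_-<r<r_+\}$, with $r_-$ only subpolynomially (in $|\log h|$) larger than $h$ and $r_+$ only subpolynomially smaller than $1$, one should obtain
\[
f(r,\vartheta)=m_0r\,(1+o(1)),\qquad \mathcal{H}^1(\partial B_r^{\mathrm{geo}})=2\pi r\,(1+o(1)).
\]

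\emph{Lower bound: slicing via Gauss--Bonnet and the spherical isoperimetric inequality.} For $r\in A$, Gauss--Bonnet on $B_r^{\mathrm{geo}}$ gives $\int_{B_r^{\mathrm{geo}}}K_y\,dA_{g_y}=2\pi-\int_0^{2\pi}\partial_rf(r,\vartheta)\,d\vartheta$, and averaging the preceding asymptotics in $\vartheta$ yields $\int_{B_r^{\mathrm{geo}}}K_y\,dA_{g_y}\geq 2\pi(1-m_0)-o(1)$. Since this integral is the signed area of $\nu_y(B_r^{\mathrm{geo}})\subset S^2$, the spherical isoperimetric inequality gives
\[
\int_{\partial B_r^{\mathrm{geo}}}|D\nu_y|\,d\mathcal{H}^1\geq\mathcal{H}^1\bigl(\nu_y(\partial B_r^{\mathrm{geo}})\bigr)\geq 2\pi\sqrt{1-m_0^2}\,(1-o(1)).
\]
Cauchy--Schwarz on the geodesic circle, combined with the length estimate $\mathcal{H}^1(\partial B_r^{\mathrm{geo}})\leq 2\pi r(1+o(1))$, yields $\int_{\partial B_r^{\mathrm{geo}}}|D\nu_y|^2\,d\mathcal{H}^1\geq 2\pi(1-m_0^2)r^{-1}(1-o(1))$. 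Integrating against $dr$ via the coarea formula over $r\in A$ gives $\int_{B_1}|D\nu_y|^2\,dx\geq \Cs\log(r_+/r_-)-o(|\log h|)$, and tracking the widths of the two boundary layers plus the compounding $o(1)$ errors produces the announced $\Cs|\log h|-\tfrac32\log|\log h|-C$.

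\emph{Main difficulty.} The hardest step is the first one of the lower bound: quantitatively translating the $L^\infty$ smallness of $g_y-g_0$ (which lives in the Euclidean chart) into pointwise control of $f-m_0r$ and of the geodesic circle length in the geodesic polar chart. These two charts are related via $\exp_0$ for the metric $g_y$ itself, so the change of chart is only indirectly constrained by the energy, and in particular the boundary layer near $r=0$, where $\partial_rf$ interpolates between $1$ and $m_0$, has to be handled with care. Carrying this quantitative information through Gauss--Bonnet, the spherical isoperimetric inequality, Cauchy--Schwarz, and the radial integration is exactly what produces the $-\tfrac32\log|\log h|$ gap with the matching upper bound.
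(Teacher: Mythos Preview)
Your upper bound construction and the overall architecture of the lower bound (slice along circles, use Gauss--Bonnet to identify the curvature integral, apply a spherical isoperimetric inequality, then Cauchy--Schwarz/Jensen and radial integration) match the paper. However, there is a genuine gap at the Gauss--Bonnet step.

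You claim that from $f(r,\vartheta)=m_0r(1+o(1))$ one obtains $\int_{B_r^{\mathrm{geo}}}K\,dA_{g_y}\geq 2\pi(1-m_0)-o(1)$. But the identity you quote reads $\int K\,dA=2\pi-\int_0^{2\pi}\partial_r f\,d\vartheta$, so what is needed is control of $\partial_r f$, not of $f$. Pointwise smallness of $f-m_0r$ says nothing about $\partial_r f$ at a fixed radius: think of $f=m_0r+\delta\sin(r/\delta^2)$. What rules out such oscillations is a bound on $\partial_r^2 f$, and since $K=-\partial_r^2 f/f$ this is exactly where the bending energy must enter a second time. The paper's key ingredient (Proposition~\ref{prop:kappaL1}) is the one-dimensional interpolation
\[
\|f'\|_{L^1}\leq C\,\|f\|_{L^1}^{1/2}\,\|f''\|_{L^1}^{1/2},
\]
applied to the angle-averaged quantity, with $\|f\|_{L^1}\lesssim h|\log h|^{1/2}R$ coming from the membrane smallness (after the chart change you correctly flag as delicate) and $\|f''\|_{L^1}\lesssim|\log h|$ coming from the bending bound. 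This yields only an $L^1$-in-radius estimate,
\[
\int_{2h}^{2h+R}\bigl|\K(B_\rho)-2\pi(1-m_0)\bigr|\,d\rho\leq C\,R^{1/2}h^{1/2}|\log h|^{3/4},
\]
not the pointwise $o(1)$ you assert. The final dyadic summation of this error against the weight $1/\rho$ is what forces the inner cut-off at $h_1\asymp h|\log h|^{3/2}$ and hence the $-\tfrac32\log|\log h|$ correction. Without the interpolation your plan does not produce any bound on $\K(B_r)-2\pi(1-m_0)$ at a given radius, so the stated lower bound does not follow. You have also misidentified the ``main difficulty'': the chart change (Lemmas~\ref{lem:Linf} and~\ref{lem:detestim}) is delicate but ultimately soft, whereas the interpolation is the quantitative heart of the argument.

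A smaller point: your isoperimetric step treats $\int K\,dA$ as the area of a set $\nu(B_r)\subset S^2$, but it is actually the integral of the Brouwer degree of $\nu$, a $\Z$-valued function that may be negative or exceed $1$. The paper handles this via a coarea/BV argument (Lemma~\ref{lem:H1lb}) giving $\int_{\partial B_\rho}|D\nu|\geq F\bigl(\di(\K(B_\rho))\bigr)$; near $\K=2\pi(1-m_0)$ this reduces to what you wrote, but the reduction is not automatic.
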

Of course, the  choice of $I_h^\infty$ as the membrane energy  requires  further comment.

Being able to use the geodesic polar coordinates globally by our Assumption
\ref{ass1},  it still remains the problem that the elastic energy should be given in the coordinates of the reference configuration. We will need to show that smallness of the membrane term implies that these two sets of coordinates   differ only by little (in an $L^\infty$ sense), in order to be able to translate energy estimates in one set of coordinates into the other. \\
Unfortunately, we have not been able to do so using a more standard free energy like
\begin{equation}
I_h(y)=\|g_y-g_0\|_{L^2(B_1)}+h^2\|D\nu\|_{L^2(B_1)}\,.\label{eq:12}
\end{equation}
(For a justification of the latter, starting from 3-dimensional nonlinear
elasticity, see e.g.~\cite{MR2358334}.) The reason why we have to work with
$I_h^\infty$ instead of $I_h$ is that
in the construction of the lower bound, we will have to assume that the metric error $g_y-g_0$ is small in $L^\infty$, and not only in $L^2$ (cf.~Remark \ref{rem:Ihrem}).\\
Furthermore, in the definition of the membrane energy
$E^{h,\infty}_{\mathrm{membrane}}=\|g_y-g_0\|_{L^\infty(B_1\setminus B_h)}$,
we excluded a ball of radius $h$ from the domain so that our upper bound
construction (see Lemma \ref{lem:upperbound}), that we believe to capture the
qualitative shape of the regular cone,  satisfies the ``right'' energy bound
$C^*h^2|\log h|+Ch^2$  (see also Remark \ref{rem:hrem}).\\
\\
Next, we would like to comment on  Assumption \ref{ass1}. On the one hand,
there exists a large set of deformations that satisfies the assumption. On the
other hand, note that the non-existence of conjugate points to the origin is a
strong assumption in the following sense. 
Let $K$ denote the Gauss curvature and $\d A$  the volume element of
$(B_1,g_y)$. The integral $\int_{B_1}|K|\d A$ is the geometric quantity that is controlled by the bending
energy. On a given geodesic, there will exist a conjugate point if enough
(positive) curvature is concentrated on that geodesic. This can be achieved by a
modification of the deformation that raises
$\int_{B_1}|K|\d A$ by an arbitrarily small amount. Hence, the existence
of conjugate points is not penalized by the energy (at least not sufficiently to
assume that low energy maps satisfy Assumption \ref{ass1} in general).\\
While so far we have only been able to carry out the interpolation technique for
the Gauss curvature in  geodesic polar coordinates, we believe that the
feasibility of this approach  should in principle not
depend on the choice of coordinates. The reason for this hope is that the
objects in  our interpolation inequalities (i.e., the metric and the
Gauss curvature) have a coordinate independent geometric meaning.  
}
{
\subsection{Scientific context}
\label{sec:scientific-context}
Starting in the  1990's, there has been a lot of activity in the physics and
engineering community devoted to the description of energy focusing in thin
elastic sheets. The regions where the elastic energy is focused are 
approximately straight folds, approximately conical singularities, or a
(possibly very complex) network consisting of both. The first instance of a
scaling law for the elastic energy in such a setting, in the sense of an asymptotic limit for
vanishing sheet thickness $h$, can be found in
\cite{witten1993asymptotic,Lobkovsky01121995}. The first detailed analysis
of the energy scaling for a ``minimal ridge'' in the F\"oppl-von K\'arm\'an setting is due to Lobkovsky
\cite{MR1388237}. Single ridges were further investigated in
\cite{1997PhRvE..55.1577L,MR1752602,2001PhRvL..87t6105D,PhysRevLett.78.1303,MR1434213,MR2023444}. The
vertices where elastic energy focuses were studied in a simplified setting (the
so-called \emph{d-cone}) in 
\cite{MR1447150,CCMM,PhysRevLett.80.2358}. 
There also
exist earlier works on the buckling
transitions for spherical shells \cite{PhysRevA.38.1005,witten1993asymptotic,lidmar2003virus} that
display energy focusing in certain regimes. A thorough overview
over this area and many more references can be found in the very recommendable
review article by Witten \cite{RevModPhys.79.643}.\\ 
The mathematical works closest to the present one in spirit and content are
\cite{MR2358334,MR3102597,MR3168627}. In \cite{MR2358334},  it has been shown that  the elastic energy per unit thickness of a ``single fold''  scales with $h^{5/3}$ , building on results from \cite{MR2023444}. In
\cite{MR3168627,MR3102597}, the following has been proved: Consider an 
elastic sheet in the shape of a disc, and a conical deformation with the apex of the cone at
the center of the sheet. Now restricting to  non-singular configurations of the
sheet that agree with the singular one on the boundary and at the center, the
elastic energy per  unit thickness scales with $h^2|\log h|$.\\
Mathematically, the papers \cite{MR2358334,MR3102597,MR3168627} treat variational problems for energy functionals that
have a  membrane and a bending term,
with certain Dirichlet boundary conditions. These boundary conditions 
 are chosen such that there exists a unique
 (Lipschitz) isometric immersion that satisfies them. Additionally,
 this map is singular in the sense that it has infinite bending
 energy. (In the sequel, we will call such boundary conditions ``tensile''.)
Deviations from the isometric immersion that satisfies the tensile boundary conditions are penalized by the membrane energy.
The optimal balance between membrane and bending energy yields the lower bound in the elastic energy.\\
Obviously, this method of proof relies heavily on the choice of boundary conditions. The method
completely brakes down in the current setting. \\
\\
To the best of our knowledge, the use of tensile boundary conditions has so far
been the only tool to prove
lower bounds in variational problems for thin elastic sheets that display
energy focusing in ridges or vertices.
We speculate that the interpolation techniques for the Gauss curvature that we
use here can be a tool for proving such lower bounds when tensile boundary
conditions are not present. This would be particularly interesting in the
so-called crumpling problem. The latter  consists in determining the minimal
amount of energy necessary to fit an elastic sheet into a container whose
diameter is smaller than the diameter of the sheet.\\ 
Of course, to make the speculation about wider applicability of our techniques more substantive, we
would have to get rid of the technical assumptions made in the present
work. This will be the topic of future research.
}

{
\subsection{The relation to the $C^{1,\alpha}$ isometric immersion problem}
In this subsection, we want to comment on the relation between the present
results  and the isometric immersion problem with low
regularity.
}

By the
results by Nash \cite{MR0065993} and Kuiper \cite{MR0075640}, every short
immersion $\tilde y:U\to\R^3$ of $(U,g_0)$ can  be approximated arbitrarily well in the $C^0$-norm by
$C^1$-isometric immersions. This is an instance of Gromov's famous $h$-principle \cite{MR864505}.
Looking at the elastic energy functional $I_h^\infty$,  the Nash-Kuiper construction yields
 maps with arbitrarily small membrane energy $C^0$-close  to a
given short immersion $\tilde y$. Hence, there is an abundance of maps with
arbitrarily small membrane energy. { On a heuristic level, the fundamental difficulty that we encounter
here,  and that is shared by the present problem with the  crumpling
problem, is how to  show that all of these many degrees of freedom 
  have large bending energy}.\\
 In sharp contrast to the $C^1$ setting, there is the classical rigidity in the 
 Weyl problem. This result states that  any isometric immersion  of a given
 Riemannian manifold $(S^2,g)$ in
 $C^2(S^2;\R^3)$ is \emph{rigid}, i.e., unique up
to a rigid motion. \\
The striking contrast between the results for $C^1$- and $C^2$ isometric
immersions naturally leads to the question how the situation looks like for
$C^{1,\alpha}$ isometric immersions.
In \cite{MR0192449}, Borisov announced that if $g$ is analytic, the
$h$-principle holds for isometric immersions in $C^{1,\alpha}(M;\R^3)$ for
$\alpha<\frac17$. A proof can be found in \cite{MR2047871}.
Another result by Borisov \cite{MR0116295,MR0131225} states that for $\alpha>\frac23$, any isometric embedding $y\in C^{1,\alpha}(S^2;\R^3)$ of a $C^2$-metric $g$ on $S^2$ with positive Gauss curvature is rigid. \\
\\
For our purposes, the latter is relevant.
The proof of the statement consists in showing that the
embedded manifold $M$ is of \emph{bounded extrinsic curvature}, i.e.,
\[
\begin{split}
\sup\Big\{&\sum_{i=1}^N\H^2(\nu_y(E_i)):\,N\in\N,\,\\
& \{E_i\}_{i=1,\dots,N} \text{ a  collection of closed disjoint  subsets of
}M\Big\} <\infty\,,
\end{split}
\]
{
where $\H^2$ denotes the two-dimensional Hausdorff measure. This requirement is sufficient}
to obtain the rigidity result, using classical results by Pogorelov \cite{MR0346714}. We see that quantitative control over extrinsic
curvature (the left hand side
above)
eliminates { the possibility of constructions in the style  of  Nash
  and Kuiper. Indeed,} extrinsic curvature  diverges in the
Nash-Kuiper approximation scheme.
Since we will be dealing with smooth maps here,  the extrinsic curvature is just
the $L^1$-norm of the Gauss curvature, $\int_U|K_y|\d A_y$, which is controlled
by the bending energy.
{ Heuristically, this supports the view that gaining control over the
Gauss curvature is also the natural way of dealing with variational problems
that allow for short maps.}

\subsection{Plan of the paper} This paper is structured as follows: { In Section
\ref{sec:notat-prel}, we  define our notation and make some preliminary
observations and definitions. }In Section
\ref{sec:stretching}, we establish $L^\infty$-estimates  for  the
exponential map, that in the sequel will allow us to translate bounds on the
elastic energy into estimates of the metric and the Gauss curvature in the
geodesic polar coordinates, see Lemma
\ref{lem:Linf}. In Section \ref{sec:inter}, we use these results to
interpolate between the metric and the Gauss curvature, as in the ansatz that we
have proposed in this introduction. One more ingredient is needed to carry out
the ansatz, namely an appropriate isoperimetric inequality on the sphere, which
is proved in Section \ref{sec:an-isop-ineq}. In Section
\ref{sec:proof-main-theorem}, we combine these items to prove our main result.

\subsection*{Acknowledgments}
The author would like to thank Stefan M\"uller for helpful discussions.

\section{{Notation and preliminaries}}
\label{sec:notat-prel}
For a  manifold $M$ and $x\in M$, $T_xM$ denotes the tangent space at $x$, and $T^*_xM$ its dual. For a finite dimensional vector space $V$, the space of alternating $k$-linear forms $V^{\otimes k}\to \R$ is denoted by $\wedge^kV$. A $k$-form on a manifold $M$ is  a map that associates to every $x\in M$ an element of $\wedge^kT_xM$.
If $M$ is a subset of $\R^2$, the gradient $Df$ of a function  $f:M\to\R^m$ is defined via
duality to $\d f$ with the standard Euclidean metric,
\[
Df=(\partial_1 f,\partial_2 f)^T\,,
\]
even if $M$ is equipped with a Riemannian metric that is different from the
Euclidean one.\\
{ The $k$-dimensional Hausdorff and Lebesgue measures will be denoted
  by $\H^k$, $\L^k$ respectively.} 
The oscillation of a function $u:U\to \R$ is denoted by $\osc{U}u=\sup_U u-\inf_Uu$. The space of Lipschitz functions from  $U\subset\R^n$ to some metric space $V$ is denoted by $\Lip(U;V)$; if $V=\R$, we simply write $\Lip(U)$.\\
Unless stated otherwise, the symbol ``$C$'' will be used as follows: The statement 
``$f\leq Cg$'' is short-hand for ``There exists a constant $C>0$, that only
depends on $m_0$, with the property  $f\leq C g$''. The value of $C$ may change
from line to line. Sometimes it will be convenient to be able to refer to the same constant later on in the text; in this case, we denote it by $C_i$, $i=1,2,\dots$, and it will be fixed.

{
We now want to make precise the definition of the polar coordinates
$\rho,\vartheta$,  the reference metric $g_0$, and of the norm on the space of
metrics, that have already been used
in the introduction.\\}
On $B_1$, we introduce polar
coordinates $\rho,\vartheta$  by
\[
\begin{split}
  (\rho,\vartheta):B_1\to& (0,1)\times S^1\\
  x\mapsto& (|x|,x/|x|)\,.
\end{split}
\]
The symbols $\partial_\rho,\partial_\vartheta$ will at the same time denote partial differentiation
with respect to $\rho,\vartheta$ as well as the vector fields $B_1\to TB_1$, defined by their
actions on functions $f:B_1\to \R$, 
\[
\left(\partial_\rho f\right)(x)=
\frac{x}{|x|}\cdot Df(x)\,,\quad \left(\partial_\vartheta f\right)(x)=
x^\bot\cdot Df(x)\,,
\]
where $x^\bot=(x_2,-x_1)^T$ for $x=(x_1,x_2)^T$.\\
Now we fix $0<m_0<1$ and define the  reference metric $g_0:B_1\to T^*B_1\otimes
T^*B_1$ by \eqref{eq:17}.\\
Next we need to specify the norm on the space of metrics used in the definition
of the free elastic energy \eqref{eq:12}.
For  $x\in B_1\setminus\{0\}$, we define a norm on  inner products
 $p:T_xB_1\times T_xB_1\to \R$ on
the tangent space at $x$: For
\[
\begin{split}
p=& p_1
  \d\rho\otimes\d\rho+p_2\rho(\d\rho\otimes\d\vartheta+\d\vartheta\otimes\d\rho)+p_3\rho^2\d\vartheta\otimes
  \d\vartheta\\
=& (\d\rho,\rho\d \vartheta)\otimes\left(\begin{array}{cc}p_1 &p_2\\p_2&p_3\end{array}\right)\left(\begin{array}{c}\d\rho \\
        \rho\d\vartheta\end{array}\right)
\end{split}
\]
we set
\[
\|p\|^2=  p_1^2+2p_2^2+p_3^2\,.
\]
This makes $\|\cdot\|$ well defined since $\d\rho(x)$, $\rho\d\vartheta(x)$ span
$T^*_xB_1$ for every $x\in B_1\setminus\{0\}$. Then we define the
$L^\infty$-norm of a metric $g$ on 
{$A\subset B_1$ by
\[
\|g\|_{L^\infty(A)}=\mathrm{esssup}_{x\in A}\|g(x)\|\,.
\]
}
The objects $N_y,(\exp_0)_y,g_y,\nu_y$ as well
as the maps that we will introduce in
the following  all depend on $y$ -- for notational convenience we will not
indicate this dependence anymore from now on.\\
\\
Next we introduce radial
coordinates $(r,\varphi)$ with respect to $N\subset T_0B_1\simeq
\R^2$,
\[
\begin{split}
  (r,\varphi):B_1\setminus\{0\}\to& (0,\infty)\times S^1\\
  x\mapsto &\left(\left|\tilde x \right|, \tilde x/\left| \tilde x\right|\right)\,,
\end{split}
\]
where $\tilde x=(\exp_0)^{-1}(x)$.\\
The vector fields $\partial_r,\partial_\varphi$
are defined on $N$ analogously to the definition of $\partial_\rho,\partial_\vartheta$ on $B_1$ above; and using the push forward $\exp_0^*$, we will view them as vector fields on $B_1$ from now on. \\
\\
By  Assumption \ref{ass1}, the map 
$(\rho(x),\vartheta(x))\mapsto (r(x),\varphi(x))$ is regular on its domain and hence we may write
\begin{equation}
\begin{split}
  \left(\begin{array}{c}\d r  \\
      r \d\varphi\end{array}\right)
=& \Gm  \left(\begin{array}{c}\d\rho \\
      \rho\d\vartheta\end{array}\right)\,,
\end{split}\label{eq:25}
\end{equation}
where $\Gm: B_1\setminus \{0\}\to\R^{2\times 2}$ is defined by
\[
\Gm=\left(\begin{array}{cc}\partial_\rho r &
      \rho^{-1}\partial_\vartheta r \\ r\partial_\rho\varphi&
      r\rho^{-1}\partial_\vartheta \varphi\end{array}\right)\,.
\]
Note that

\begin{equation*}
r\d r\wedge \d\varphi=\det\Gamma \rho \d \rho\wedge\d \vartheta\,.
\end{equation*}
By  Assumption \ref{ass1}, $r\d r\wedge \d\varphi$ is a nowhere vanishing two-form on $B_1\setminus\{0\}$, and hence
\begin{equation}
\det\Gamma>0\,.
\label{eq:58}
\end{equation}
We also introduce the following notation for the inverse of $\Gamma$: Let $\tilde\Gm:B_1\setminus \{0\}\to \R^{2\times 2}$ be defined by
\[
\tilde \Gm=\left(\begin{array}{cc}\partial_r \rho & r^{-1}\partial_\varphi \rho\\
\rho\partial_r\vartheta & \rho
r^{-1}\partial_\varphi\vartheta\end{array}\right)\,.
\]
An obvious consequence of this definitions is
\[  
\Gm(x)=\left(\tilde\Gm(x)\right)^{-1}\quad\text{ for all }x\in
  B_1\setminus\{0\}\,.
\]
{ Note that in the relation above, $\Gm$ and $\tilde \Gm^{-1}$ have the same
argument, since we understand all  partial derivatives as
functions on the manifold $B_1$. }Hence,
\begin{equation}
\begin{split}
\Gm=&\frac{1}{\det\tilde\Gm}\left(\begin{array}{cc}
      \rho r^{-1}\partial_{\varphi}\vartheta & -\rho\partial_r\vartheta\\
       -r^{-1}\partial_\varphi \rho& \partial_r \rho
    \end{array}\right)\,.
\end{split}\label{eq:31}
\end{equation}

Let $K:B_1\to\R$ denote the Gauss curvature, and $\d A$ the volume 2-form of $(B_1,g)$. \\
We are now going to define three functions $\Omega,\bar\Omega,G:B_1\to\R$, 
and we will make these definitions with respect to the geodesic polar coordinates $(r,\varphi)$. By Assumption \ref{ass1}, this will make them well defined as functions on $B_1$.\\
We set
\[
\Omega(0,\varphi)=\bar\Omega(0,\varphi)=1-G(0,\varphi)=0\quad \text{ for all }\varphi\in S^1
\]
and 
\begin{equation}
\begin{split}
\Omega ( r, \varphi)=&\int_0^{ r} K(s,
\varphi) \d A_{(s, \varphi)}(\partial_r,\partial_\varphi)\d s\\
\bar \Omega(r,\varphi)=& \frac{1}{r}\int_0^r\Omega(s,\varphi)\d s\\
G=&1-\bar \Omega\,.
\end{split}\label{eq:5}
\end{equation}

\section{Passage  to {geodesic polar coordinates}}
\label{sec:stretching}
The proof of Theorem \ref{thm:mainthm} will be an application of the ansatz that
we proposed in the introduction -- however, to make the interpolation argument
work, we need to pass from the polar coordinates in the reference configuration
$(\rho,\vartheta)$ to the geodesic polar coordinates
$(r,\varphi)$. 
This is what we will do in the present section.\\
\\
The starting point is to note that in the $(r,\varphi)$ coordinates, the metric
takes a particularly simple form. The following lemma can be found as  Proposition 3 and Remark 1 in Chapter 4 of
\cite{MR0394451}, or in Chapter 3 of \cite{MR532831}. For the convenience of the
reader, we include the proof here. In this proof, we denote the Levi-Civit\'a
connection of $g$ by $\nabla$.

\begin{lemma}
\label{lem:gform}
The metric $g:B_1\to T^*B_1\otimes T^*B_1$ is given by
\[
g=\d r\otimes\d r+G^2r^2 \d\varphi\otimes \d\varphi\,.
\]
\end{lemma}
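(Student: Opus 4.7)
My plan follows the standard route found in do Carmo (cited in the paper as \cite{MR0394451}), adapted to match the normalizations chosen by the author in \eqref{eq:5}.

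The approach is to establish the general form $g = \d r\otimes \d r + f^2 \d\varphi\otimes \d\varphi$ with $f$ satisfying the Jacobi equation, and then identify $f$ with $rG$ by integration. First I would record the standard consequences of the fact that, by Assumption \ref{ass1}, the curves $r\mapsto \exp_0(rv)$ (for $v$ in the unit circle of $T_0B_1$) are unit-speed geodesics for every fixed $\varphi$. This immediately gives $g(\partial_r,\partial_r)=1$, since the tangent vector of a unit-speed geodesic has constant unit length. Gauss's lemma then yields $g(\partial_r,\partial_\varphi)=0$; the usual short proof is to compute $\partial_r g(\partial_r,\partial_\varphi)=g(\nabla_{\partial_r}\partial_r,\partial_\varphi)+g(\partial_r,\nabla_{\partial_r}\partial_\varphi)$, observe that the first term vanishes because $\partial_r$ is tangent to a geodesic, and that the second equals $g(\partial_r,\nabla_{\partial_\varphi}\partial_r)=\tfrac12\partial_\varphi g(\partial_r,\partial_r)=0$ using $[\partial_r,\partial_\varphi]=0$; finally $g(\partial_r,\partial_\varphi)\to 0$ as $r\to 0$ because $\partial_\varphi$ vanishes at the origin (the coordinates degenerate there). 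Consequently
\[
g = \d r\otimes \d r + f^2\,\d\varphi\otimes \d\varphi, \qquad f(r,\varphi):=\sqrt{g(\partial_\varphi,\partial_\varphi)}\geq 0,
\]
with $f(0,\varphi)=0$ and, by differentiating $\exp_0$ at the origin, $\partial_r f(0,\varphi)=1$.

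The second step is to derive the Jacobi equation $\partial_r^2 f + Kf = 0$ for $f$. The cleanest way is to note that $\partial_\varphi$ is a Jacobi field along each radial geodesic (it is the variation field of the one-parameter family of geodesics $r\mapsto\exp_0(rv(\varphi))$), so it satisfies $\nabla_{\partial_r}\nabla_{\partial_r}\partial_\varphi + R(\partial_\varphi,\partial_r)\partial_r=0$. Pairing with $\partial_\varphi/f$ and using the definition $K=g(R(\partial_\varphi,\partial_r)\partial_r,\partial_\varphi)/(g(\partial_r,\partial_r)g(\partial_\varphi,\partial_\varphi)-g(\partial_r,\partial_\varphi)^2)$ together with the two-dimensional identity $\partial_r^2 f = g(\nabla_{\partial_r}\nabla_{\partial_r}\partial_\varphi,\partial_\varphi)/f$ (which follows from $g(\partial_r,\partial_\varphi)=0$ and $|\partial_r|=1$) delivers the stated ODE. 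This is the step I expect to require the most care, since it involves sorting out the pairings with the connection and curvature tensor.

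Finally I would identify $f$ with $rG$. In the coordinates $(r,\varphi)$, the volume form of $g$ is $\d A = f\,\d r\wedge\d\varphi$, and thus $\d A_{(s,\varphi)}(\partial_r,\partial_\varphi)=f(s,\varphi)$. The definition \eqref{eq:5} then gives
\[
\Omega(r,\varphi)=\int_0^r K(s,\varphi)\,f(s,\varphi)\,\d s = -\int_0^r \partial_s^2 f(s,\varphi)\,\d s = 1-\partial_r f(r,\varphi),
\]
using the Jacobi equation and the initial condition $\partial_r f(0,\varphi)=1$. Integrating once more and using $f(0,\varphi)=0$,
\[
f(r,\varphi)=\int_0^r\bigl(1-\Omega(s,\varphi)\bigr)\,\d s = r\,(1-\bar\Omega(r,\varphi)) = r\,G(r,\varphi).
\]
Substituting this into $g = \d r\otimes \d r + f^2\,\d\varphi\otimes\d\varphi$ yields the claim.
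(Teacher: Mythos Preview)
Your proposal is correct and follows essentially the same route as the paper: Gauss's lemma for the off-diagonal term, the Jacobi equation for the remaining coefficient, and two integrations to identify it with $rG$. The only cosmetic difference is that the paper works with $\d A(\partial_r,\partial_\varphi)$ and derives the scalar Jacobi equation by applying $\d A(\partial_r,\cdot)$ to the vector identity $\nabla_r^2\partial_\varphi=-K\partial_\varphi$, whereas you work directly with $f=\sqrt{g(\partial_\varphi,\partial_\varphi)}$; since $\d A(\partial_r,\partial_\varphi)=f$ these are the same object, and your identity $\partial_r^2 f = g(\nabla_r\nabla_r\partial_\varphi,\partial_\varphi)/f$ indeed follows from the 2D orthonormal frame computation you indicate.
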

\begin{proof}
First, we have $g(\partial_r,\partial_r)=1$ by the definition of the exponential map.
Secondly, $g$ and its first derivatives are identical to the Euclidean metric at
the origin, by basic properties of the exponential map. Hence
\[
\lim_{ r\to 0}g|_{( r,\varphi)}(\partial_\varphi,\partial_r)=0\quad \text{ for all
  }\varphi\,.
\]
Next, we compute the derivative of 
$g(\partial_\varphi,\partial_r)$ along geodesics,
\begin{equation}
\label{eq:9}
\begin{split}
\partial_r g(\partial_\varphi,\partial_r)=& g(\nabla_r\partial_\varphi,\partial_r)+g(\partial_\varphi,\underbrace{\nabla_r\partial_r}_{=0})\\
=& g(\nabla_\varphi\partial_r,\partial_r)\\
=&\frac12 \partial_\varphi g(\partial_r,\partial_r)\\
=&0
\end{split}
\end{equation}
In the second equality above, we used  that the Lie brackets of the coordinate vector fields vanish:
\[
[\partial_r,\partial_\varphi]=\nabla_r\partial_\varphi-\nabla_\varphi\partial_r=0\,,
\]
and  that $\partial_r$ is a geodesic vector field,
$\nabla_r\partial_r=0$. The initial conditions together with \eqref{eq:9} prove
that $g(\partial_\varphi,\partial_r)=0$. It remains to show that $g(r^{-1}\partial_\varphi,r^{-1}\partial_\varphi)=G^2$. \\
For vector fields $X,Y$ let $R(X,Y)=[\nabla_X,\nabla_Y]-\nabla_{[X,Y]}$ denote
the Riemann curvature tensor. 
We compute
\[
\begin{split}
\nabla_r^2\partial_\varphi=&
\nabla_r(\nabla_\varphi \partial_r)+\nabla_r[\partial_\varphi,\partial_r]\\
=&
R(\partial_r,\partial_\varphi)\partial_r+\nabla_\varphi\nabla_r\partial_r\\
=&-K\partial_\varphi\,.
\end{split}
\]
The value of $\d A$ evaluated on the vector fields
$\partial_r,\partial_\varphi$ and the first derivatives of these values 
w.r.t. $r$ at the origin are the same as in Euclidean space, by the defining property of
the exponential map. Hence
\begin{equation}
\left.
\begin{split}
\lim_{ r\to 0}\d A|_{( r,\varphi)}(\partial_r,\partial_\varphi)=&0\\
\lim_{ r\to 0}{\partial_r}\d A|_{(r,\varphi)}(\partial_r,\partial_\varphi)=&1 
\end{split}
\right\}\text{ for all } \varphi\,.
\label{eq:10}
\end{equation}
We compute the second derivative of $\d A(\partial_r,\partial_\varphi)$ along the radial curves parametrized by $r$:
\begin{equation}
\label{eq:1}
\begin{split}
\partial_r^2 \d A(\partial_r,\partial_\varphi)=&
\d A(\partial_r,\nabla_r^2\partial_\varphi))\\
=&-K\d A(\partial_r,\partial_\varphi)
\end{split}
\end{equation}
Together with the initial conditions
\eqref{eq:10}, this defines an initial value problem, and thus
$\d A(\partial_r,\partial_\varphi)$ satisfies
\[
\d A|_{(r,\varphi)}(\partial_r,\partial_\varphi)=
r-\int_0^r\d s\int_0^s\d t\, K \d A|_{(t,\varphi)}(\partial_r,\partial_\varphi)=rG(r,\varphi)
\]
By the definition of the volume form,
\[
\begin{split}
\left(\d A(\partial_r,r^{-1}\partial_\varphi)\right)^2
=& g(\partial_r,\partial_r)g(r^{-1}\partial_\varphi,r^{-1}\partial_\varphi)-\left(g(r^{-1}\partial_\varphi,\partial_r)\right)^2\\
=&g(r^{-1}\partial_\varphi,r^{-1}\partial_\varphi)
\end{split}
\]
The last three equalities show
$g(r^{-1}\partial_\varphi,r^{-1}\partial_\varphi)=G^2$. 
\end{proof}

The following lemma serves two purposes: On the one hand, it supplies the
{ upper bound} in the statement of Theorem \ref{thm:mainthm}. On the other hand, it
assures that the assumptions in the subsequent lemmas make sense.
\begin{lemma}
\label{lem:upperbound} 
{ We have}
\[\inf_{y\in\A} I_h^\infty(y)\leq C^* h^2|\log h|+Ch^2\,.\]
\end{lemma}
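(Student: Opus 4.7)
The plan is to exhibit an explicit rotationally symmetric immersion realising the claimed bound and verify directly that it lies in $\A$. Take $y$ to be a surface of revolution
\[
y(\rho,\vartheta)=(m_0\rho\cos\vartheta,\,m_0\rho\sin\vartheta,\,z(\rho))
\]
with profile $z\in C^\infty([0,1])$ chosen as follows: $z(\rho)=\sqrt{1-m_0^2}\,\rho$ for $\rho\geq h$ and $z(\rho)=\sqrt{1-m_0^2}\,\rho^2/h$ for $\rho\leq h/2$, interpolated by a plateau cutoff $\chi(\rho/h)$ on $h/2\leq \rho\leq h$ with $\chi\equiv 1$ near $\rho=h$ and $\chi\equiv 0$ near $\rho=h/2$. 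Since the small-$\rho$ formula is a smooth function of $\rho^2=x_1^2+x_2^2$, $y$ extends to a smooth immersion across the origin.

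The induced metric in polar coordinates reads
\[
g_y=(m_0^2+z'(\rho)^2)\,\d\rho\otimes \d\rho+m_0^2\rho^2\,\d\vartheta\otimes \d\vartheta,
\]
and on $\{\rho\geq h\}$ we have $z'=\sqrt{1-m_0^2}$, so $g_y\equiv g_0$ there and the membrane contribution $\|g_y-g_0\|_{L^\infty(B_1\setminus B_h)}$ vanishes identically.

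For the bending term, on the conical part a direct calculation gives the unit normal
\[
\nu=(-\sqrt{1-m_0^2}\cos\vartheta,\,-\sqrt{1-m_0^2}\sin\vartheta,\,m_0),
\]
which depends only on $\vartheta$; hence $|D\nu|^2=(1-m_0^2)/\rho^2$ and
\[
\int_{B_1\setminus B_h}|D\nu|^2\,\d x=2\pi(1-m_0^2)|\log h|=C^*|\log h|.
\]
On $B_h$ the explicit polynomial/cutoff form of $y$ gives $|D\nu|\leq C/h$ pointwise, and since $|B_h|\leq Ch^2$ one obtains $\int_{B_h}|D\nu|^2\,\d x\leq C$. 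Combining,
\[
I_h^\infty(y)\leq 0+h^2\bigl(C^*|\log h|+C\bigr)=C^*h^2|\log h|+Ch^2.
\]

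Finally I would verify $y\in\A$ via rotational symmetry: the meridians $\vartheta=\text{const.}$ are geodesics through the origin, and the arc-length function $s(\rho)=\int_0^\rho\sqrt{m_0^2+z'(t)^2}\,\d t$ is smooth and strictly increasing. Therefore $(\rho,\vartheta)\mapsto(s(\rho),\vartheta)$ provides a smooth inverse of $\exp_0$, identifying an open disk $N_y\subset T_0B_1$ with $B_1$; the absence of conjugate points follows because the Jacobi field along a meridian equals the width $r(\rho(s))=m_0\rho(s)$, which is strictly positive for $s>0$. The main (purely technical) obstacle is organising the cutoff $\chi$ so that the piecewise profile $z$ is genuinely $C^\infty$ across the two matching radii while the quadratic ansatz near $\rho=0$ yields a smooth immersion at the origin; both are handled by a standard bump-function construction.
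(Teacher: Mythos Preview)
Your proof is correct and follows essentially the same approach as the paper: both construct a rotationally symmetric test map that equals the exact cone for $\rho\geq h$ and is smoothly capped on $B_h$, the only difference being the specific cap (you keep the horizontal radius at $m_0\rho$ and interpolate the height profile $z$ between a paraboloid and the linear cone, whereas the paper interpolates the full map between the flat disk $\rho e_\rho$ and the cone $\rho e_{m_0}$), and the energy computation is then identical. Your explicit verification that $y\in\A$ via the monotone arclength and the nonvanishing Jacobi field $m_0\rho$ is a nice addition that the paper omits.
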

\begin{proof}
We define the following $\vartheta$-dependent orthonormal frame in $\R^3$:
\[
e_\rho=(\cos \vartheta,\sin\vartheta,0),\quad
e_\vartheta=(-\sin\vartheta,\cos\vartheta,0),\quad  e_z=(0,0,1)
\]
Further, we set
 $e_{m_0}:=m_0 e_\rho+\sqrt{1-m_0^2}e_z$. 
Let $\psi\in C^\infty(\R^+)$ such that $\psi(\rho)=0$ for $\rho\leq 1/2$, $\psi(\rho)=1$
for $\rho\geq 1$ and $|\psi'(\rho)|\leq 4$, $|\psi''(\rho)|\leq 8$ for all $r$. 
We claim that the upper bound is
 satisfied by the map defined by (using polar coordinates on $B_1$)
\[
\bar y(\rho,\vartheta):=\psi(\rho/h)\rho e_{m_0}
+(1-\psi(\rho/h))\rho e_\rho\,.
\]
We calculate
\[
\begin{split}
D\bar y=&\partial_\rho \bar y\otimes e_\rho+\frac{1}{\rho}\partial_\vartheta
\bar y\otimes e_\vartheta\\
=&
\left[\left(\frac{\rho}{h}\psi'+\psi\right)e_{m_0}+\left(1-\psi-\frac{\rho}{h}\psi'\right)e_\rho\right]\otimes
e_\rho\\
&+\left(1-(1-m_0)\psi\right)e_\vartheta\otimes e_\vartheta\,.
\end{split}
\]
Hence the pullback of the Euclidean metric in $\R^3$ under $\bar y$ is given by
\begin{equation}
\begin{split}
g_{\bar y}=\bar y^*\ee^3=&\left[\left(\psi+\frac{\rho}{h}\psi'\right)^2+\left(1-\psi-\frac{\rho}{h}\psi'\right)^2+2\left(\psi+\frac{\rho}{h}\psi'\right)\left(1-\psi-\frac{\rho}{h}\psi'\right)\right]\d\rho\otimes
\d\rho\\
&+\left(1-(1-m_0)\psi\right)^2 \rho^{2}\d\vartheta\otimes \d\vartheta\,.
\end{split}\label{eq:34}
\end{equation}
The surface normal is given by
\[
\nu_{\bar y}=\frac{\left(\psi+\frac{\rho}{h}\psi'\right)e_{m_0}^\bot+\left(1-\psi-\frac{\rho}{h}\psi'\right)e_z}{f(\rho)}\,,
\]
where
\[
f(\rho)=\sqrt{\left(1-m_0^2\right)\left(\psi+\frac{\rho}{h}\psi'\right)^2+\left(m_0+1-\psi-\frac{\rho}{h}\psi'\right)^2}\,.
\]
We compute the derivative of the normal,
\begin{equation}
\begin{split}
  D\nu_{\bar y}=&\left(\frac{2}{h}\psi'+\frac{\rho}{h^2}\psi''\right)\Bigg[
  \frac{\left(e_{m_0}^\bot-e_z\right)}{f(\rho)}
-\left[(2-m_0^2)\left(\psi+\frac{\rho}{h}\psi'\right)-(1+m_0)\right]\\
  &\times\frac{\left(\frac{\rho}{h}\psi'+\psi\right)e_{m_0}^\bot+\left(1-\psi-\frac{\rho}{h}\psi'\right)e_z}{f(\rho)^3}\Bigg]\otimes
  e_\rho\\
&-\frac{1}{\rho}\frac{\left(\psi+\frac{\rho}{h}\psi'\right)\sqrt{1-m_0^2}}{f(\rho)}e_\vartheta\otimes e_\vartheta
\end{split}\label{eq:35}
\end{equation}
From \eqref{eq:34} and \eqref{eq:35}, and using the properties of $\psi$, we see
that 
\[
\begin{split}
  \|g_{\bar y}-g_0\|^2&=0\quad \text{ for } \rho\geq h\,, \\
  h^2|D\nu_{\bar y}|^2&\begin{cases}\leq  C  &\text{ for }\rho< h\\
    = h^2\rho^{-2}(1-m_0^2) &\text{ for }\rho\geq h\end{cases}
\end{split}
\]
Thus we get
\[
\begin{split}
I_h^\infty(\bar y)&=\sup_{B_1\setminus B_h}  \|g_{\bar y}-g_0\|^2+h^2
\int_{B_h}|D\nu_{\bar y}|^2+h^2\int_{B_1\setminus B_h}|D\nu_{\bar y}|^2\\
  \leq & Ch^2+h^2\int_0^{2\pi}\d\vartheta\int_h^1\frac{\d\rho}{\rho}(1-m_0^2)\\
=&C^*h^2|\log h|+Ch^2\,.
\end{split}
\]
This proves the present lemma.
\end{proof}

In the following Lemma, we use the smallness of the membrane term $\|g-g_0\|_{L^\infty(B_1\setminus B_h)}$ to prove a certain ``rigidity'' of the exponential map $\exp_0$. Namely, we prove that circles in $N$ are approximately mapped to circles in $B_1$ in an $L^\infty$ sense. This allows us to pass from the radial coordinate $\rho$ in the reference configuration to the radial coordinate $r$ in $N$. 
\begin{lemma}
\label{lem:Linf}
Let $y\in \A$ with $I_h^\infty(y)< 2C^* h^2|\log h|$. { Setting $r_0:=
\sup_{\partial B_{2h}}r$, we have}
\begin{equation}
\sup_{\substack{2h\leq \rho\leq 1\\\vartheta\in S^1}}
|r(\rho,\vartheta)-r_0-\rho|\leq C_1 h|\log h|^{1/2}\,.\label{eq:43}
\end{equation}
\end{lemma}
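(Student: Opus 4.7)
My plan is to derive both inequalities from the closeness of $g_y$ to the reference metric $g_0$ outside $B_h$, together with two elementary pointwise norm bounds: $|dr|_g=1$ (since $r$ is the $g$-distance to the origin by definition of geodesic polar coordinates) and $|d\rho|_g\le 1+C\e$ on $B_1\setminus B_h$. From the hypothesis $I_h^\infty(y)<2C^*h^2|\log h|$ one has $\|g_y-g_0\|_{L^\infty(B_1\setminus B_h)}\le\e$ with $\e:=\sqrt{2C^*}\,h|\log h|^{1/2}$, which by the definition of the norm in Section~\ref{sec:notat-prel} reads $|g_{\rho\rho}-1|\le\e$, $|g_{\rho\vartheta}/\rho|\le\e/\sqrt{2}$, $|g_{\vartheta\vartheta}/\rho^2-m_0^2|\le\e$ on $B_1\setminus B_h$. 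Inverting the $2\times 2$ metric matrix yields $g^{\rho\rho}=1+O(\e/m_0^2)$, hence $|d\rho|_g\le 1+C\e$ there, provided $h$ is small enough that $\e\ll m_0^2$.

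For the upper bound I would use the radial curve $s\mapsto(s,\vartheta)$, $s\in[2h,\rho]$, whose $g$-length is at most $(\rho-2h)\sqrt{1+\e}$. Combined with the triangle inequality and $r(2h,\vartheta)\le r_0$, this gives
\[
r(\rho,\vartheta)\le r_0+(\rho-2h)\sqrt{1+\e}\le r_0+\rho+C\e.
\]

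For the lower bound I first control the oscillation of $r$ on $\partial B_{2h}$. Its $g$-length is $\int_0^{2\pi}\sqrt{g_{\vartheta\vartheta}(2h,\vartheta)}\,d\vartheta\le 4\pi h\sqrt{m_0^2+\e}\le Ch$, so any two points of $\partial B_{2h}$ are joined by an arc of length at most $Ch/2$, and $|dr|_g=1$ forces $\osc{\partial B_{2h}}r\le Ch$; in particular $\inf_{\partial B_{2h}}r\ge r_0-Ch$. Next, fix $(\rho,\vartheta)$, pick a minimising $g$-geodesic $\gamma^*$ from $0$ to $(\rho,\vartheta)$, and let $p^*\in\gamma^*\cap\partial B_{2h}$ be a crossing point; since sub-arcs of minimisers are minimising, $r(\rho,\vartheta)=r(p^*)+d_g(p^*,(\rho,\vartheta))$. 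The task reduces to showing $d_g(p,(\rho,\vartheta))\ge(\rho-2h)(1-C\e)$ for every $p\in\partial B_{2h}$. For a competitor curve $\tilde\gamma$ from $p$ to $(\rho,\vartheta)$, the bound $|d\rho|_g\le 1+C\e$ gives
\[
\int_{\{t:\tilde\gamma(t)\in B_1\setminus B_h\}}|(d\rho)(\tilde\gamma')|\,dt\le(1+C\e)\,\mathrm{length}(\tilde\gamma).
\]
On the other hand, the total variation of $\rho\circ\tilde\gamma$ on $I:=\{\tilde\gamma\in B_1\setminus B_h\}$ is at least $\rho-2h$: if $I=[0,1]$ this is immediate; otherwise the connected components of $I$ have endpoints either in $\{0,1\}$ (with $\rho$-values $2h$ and $\rho$) or on $\partial B_h$ (with $\rho=h$), and the first and last components contribute variations of at least $|2h-h|=h$ and $|\rho-h|=\rho-h$ respectively. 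Either way $\mathrm{length}(\tilde\gamma)\ge(\rho-2h)(1-C\e)$, yielding the claim.

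The main technical difficulty is precisely this last step: we have no control on $g$ inside $B_h$, so a pointwise estimate on $|d\rho|_g$ globally on $B_1$ is unavailable, and one must argue through the total variation of $\rho\circ\tilde\gamma$ on the complement of $B_h$ instead. Putting both inequalities together and absorbing all $O(h)$ and $O(\e)$ errors into $C_1 h|\log h|^{1/2}$ then gives the claimed estimate~\eqref{eq:43}.
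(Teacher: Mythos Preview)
Your proof is correct and follows essentially the same strategy as the paper: both bound $\osc{\partial B_{2h}}r$ by $Ch$ using the $g$-length of arcs on $\partial B_{2h}$, and both exploit the $L^\infty$-smallness of $g-g_0$ on $B_1\setminus B_h$ to compare $g$-lengths of curves with reference lengths. The noteworthy difference is in the lower bound. The paper compares $d_g(x,x_0)$ with $d_{g_0}(x,x_0)$ directly, asserting that $\mathrm{length}_g(\gamma)\ge(1-C\e)^{1/2}\,\mathrm{length}_{g_0}(\gamma)$ for \emph{every} competitor curve $\gamma$ from $x_0$ to $x$; this pointwise comparison is only valid on $B_1\setminus B_h$, and the paper is silent about curves that dip into $B_h$. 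Your route via the dual bound $|d\rho|_g\le 1+C\e$ and the total variation of $\rho\circ\tilde\gamma$ restricted to $\{\tilde\gamma\in B_1\setminus B_h\}$ handles this case explicitly: the first and last excursions across $\partial B_h$ already contribute variation $h+(\rho-h)\ge\rho-2h$, so nothing is lost even if the curve visits the uncontrolled region. The cost is a short case analysis; the gain is a fully self-contained lower bound that does not rely on any property of $g$ inside $B_h$.
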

\begin{proof}
Let $d_g,d_{g_0}$ denote the distance functions with respect to $g,g_0$ respectively, i.e.,
\[
d_g(x,x')=\min\left\{\int_0^1 g|_{\gamma(t)}(\gamma'(t),\gamma'(t))^{1/2}\d t:\gamma\in \Lip([0,1];B_1),\,\gamma(0)=x,\,\gamma(1)=x'\right\}\,.
\]
The minimum is  achieved by a geodesic connecting $x$ and $x'$. The function $d_{g_0}$ is defined analogously.\\
Let $x\in B_1\setminus \overline{B_h}$. Recall that $r(x)$ is the distance from the origin with respect to the induced metric $g$,
\[
r(x)=d_g(x,0)\,.
\]
We claim that 
\begin{equation}
\osc{\partial B_{2h}}r\leq  Ch\,,\label{eq:61}
\end{equation}
Indeed, let  $x,x'\in \partial B_{2h}$. There exists a curve $\gamma\in \Lip([0,1];B_1\setminus B_h)$ with $\gamma(0)=x$, $\gamma(1)=x'$, and $g_0(\gamma'(t),\gamma'(t))\leq Ch^2$ for all $t\in [0,1]$. Hence
\[
\begin{split}
  \int_0^1 g|_{\gamma(t)}(\gamma'(t),\gamma'(t))^{1/2}\d t\leq &
  \int_0^1 \left(g_0|_{\gamma(t)}(\gamma'(t),\gamma'(t))(1+ \|g-g_0\|_{L^\infty(B_1\setminus B_h})\right)^{1/2}\d t\\
  \leq& Ch\,.
\end{split}
\]
Hence, by the triangle inequality for $d_g$, 
$|d_g(x,0)-d_g(x',0)|\leq d_g(x,x')\leq Ch$, which proves \eqref{eq:61}.\\
Now let $x\in B_1\setminus \overline{B_{2h}}$, and $\gamma$ a geodesic connecting $0$ and $x$ with $\gamma(0)=0$ and $\gamma(1)=x$.
There exists some $t_0\in (0,1)$ such that $|\gamma(t_0)|=2h$ and $|\gamma(t)|>2h$ for all $t\in (t_0,1]$. We write $x_0=\gamma(t_0)$, and note $\left||x|-d_{g_0}(x,x_0)\right|\leq Ch$.\\
Now we have 

\begin{equation}
\begin{split}
  |r(x)-r_0-|x||
\leq &|d_g(x,x_0)-d_{g_0}(x,x_0)+d_g(0,x_0)-r_0|+Ch\\
  \leq & |d_g(x,x_0)-d_{g_0}(x,x_0)|+Ch\,,
\end{split}\label{eq:62}
\end{equation}
where we have used \eqref{eq:61} in the second inequality.\\
Let $\gamma_0$ be a curve connecting $x$ and $x_0$ with $\gamma_0(0)=x_0$, $\gamma_0(1)=x$ and 
\[
d_{g_0}(x,x_0)=\int_0^1 g_0|_{\gamma_0(t)}(\gamma_0'(t),\gamma_0'(t))^{1/2}\d t. 
\]
Then 

\begin{equation}
\begin{split}
  d_{g}(x,x_0)=&\int_0^1 g|_{\gamma_0(t)}(\gamma_0'(t),\gamma_0'(t))^{1/2}\d t\\
  \leq & \int_0^1\left((1+Ch|\log h|^{1/2})g_0|_{\gamma_0(t)}(\gamma_0'(t),\gamma_0'(t))\right)^{1/2}\d t\\
\leq & (1+Ch|\log h|^{1/2})d_{g_0}(x,x_0)\,.
\end{split}\label{eq:63}
\end{equation}
On the other hand, for any curve $\gamma$  connecting $x$ and $x_0$ with $\gamma_0(0)=x_0$, $\gamma_0(1)=x$, we have
\[
\begin{split}
  \int_0^1 g|_{\gamma(t)}(\gamma'(t),\gamma'(t))^{1/2}\d t\geq &
  \int_0^1 \left((1-Ch|\log h|^{1/2}) g_0|_{\gamma(t)}(\gamma'(t),\gamma'(t))\right)^{1/2}\d t\\
\geq & (1+Ch|\log h|^{1/2})d_{g_0}(x,x_0)\,.
\end{split}
\]
Hence we also get $d_g(x,x_0)\geq (1-Ch|\log h|^{1/2})d_{g_0}(x,x_0)$. Using this last inequality and \eqref{eq:63} in \eqref{eq:62}, the claim of the lemma is proved.
\end{proof}

\begin{remark}
\label{rem:Ihrem}
If one works with the energy functional $I_h$ coming from the Kirchhoff-Love ansatz instead, it is possible to show an analogous claim if one additionally assumes that
\begin{equation}
\|\nabla_{g_0}r-e_\rho\|_{L^2}\leq C \|g-g_0\|_{L^2}\,,\label{eq:7}
\end{equation}
where $\nabla_{g_0}$ denotes the covariant derivative w.r.t.~$g_0$, and $e_\rho$ denotes the unit vector in $\rho$ direction in the reference configuration.\\
We believe that \eqref{eq:7} does hold true, since 
\begin{equation}
\|g-g_0\|^2\leq C \dist^2\left(\left(\begin{array}{c}\nabla_{g_0} r\\ Gr\nabla_{g_0}\varphi\end{array}\right),SO(2)\right)\quad \text{(pointwise)}\,,\label{eq:11}
\end{equation}
cf.~equation \eqref{eq:30} in the proof of Lemma \ref{lem:detestim} below. This looks quite similar to the situation in the Geometric Rigidity Theorem by Friesecke, James and M\"uller \cite{MR1916989}. However, we have not been able to adapt these ideas to deduce \eqref{eq:7} from \eqref{eq:11}.\\
If one is able to show \eqref{eq:7}, then one can deduce more or less the same statement as in Lemma \ref{lem:Linf} through a combination of arguments as in the proof of Morrey's inequality $\|f\|_{C^{0,1-n/p}}\leq
C\|f\|_{W^{1,p}}$ and Assumption \ref{ass1}. Then one would also be able to show the same statement for $I_h$ as in Theorem \ref{thm:mainthm} (with different coefficients in front of the lower order terms).
\end{remark}

\section{Interpolation between metric and Gauss curvature}
\label{sec:inter}

Now we are in position to follow the ansatz from the introduction and
interpolate between the metric and the Gauss curvature in the $(r,\varphi)$
coordinates. The interpolation will be of the standard type 
\begin{equation}\label{eq:8}
\|\partial_rf\|_{L^1}\leq C
\|f\|_{L^1}^{1/2}\|\partial_r^2f\|_{L^1}^{1/2}
\end{equation}
(see e.g.~\cite{gilbarg2001elliptic}, Theorem 7.28), where
\[
f(r)=\frac{r}{m_0} \int_{S^1}\d\varphi\, G( r,\varphi)+\text{ linear
terms in }r\,.
\] 
Since $G$ is defined via the double
integral of the Gauss curvature (cf.~\eqref{eq:5}), the second derivative
$\partial_r^2f(r)$ can be bounded in $L^1$ by the
bending energy.\\
\\
In order  to establish smallness of $\|f\|_{L^1}$ from the smallness of the metric error $g-g_0$, we will need the following lemma.
\begin{lemma}
\label{lem:detestim}
\begin{itemize}
There exists a  constant $C=C(m_0)$ with the following property:
\item[(i)] For every $y\in\A$, 
\[
\left(\frac{G\det\Gamma}{m_0}-1\right)^2
\leq C \|g-g_0\|^2\quad\text{(pointwise),}
\]
and hence for every $y\in\A$ with $I_h^\infty(y)\leq 2C^* h^2|\log h|$,
\[
\left|\frac{G\det\Gamma}{m_0}-1\right|
\leq C h|\log h|^{1/2}\,.
\]
\item[(ii)] For every $y\in\A$ with $I_h^\infty(y)\leq 2C^* h^2|\log h|$, every $R$ with $2h\leq R\leq 1$, and every $\vartheta\in S^1$,
\[
\int_{2h}^R  \left|\partial_\rho r(\rho,\vartheta)-1\right|\d\rho\leq Ch|\log h|^{1/2}\,.
\]
\end{itemize}
\end{lemma}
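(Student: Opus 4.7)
The plan is to prove (i) by expressing $G\det\Gamma$ as the square root of the determinant of the matrix representation of $g$ in the basis $(d\rho, \rho\, d\vartheta)$, and then invoking a pointwise algebraic inequality. Substituting $(dr, r\, d\varphi)^T = \Gamma(d\rho, \rho\, d\vartheta)^T$ into the expression $g = dr \otimes dr + G^2 r^2\, d\varphi \otimes d\varphi$ from Lemma~\ref{lem:gform} shows that, in the basis $(d\rho, \rho\, d\vartheta)$, the coefficient matrix of $g$ is $M^\top M$ with $M = \mathrm{diag}(1, G)\,\Gamma$. Hence this matrix has determinant $(G\det\Gamma)^2$, and by \eqref{eq:58} together with positivity of $G$ (which follows from $rG = dA(\partial_r, \partial_\varphi) > 0$, as in the proof of Lemma~\ref{lem:gform}), $G\det\Gamma$ is the positive square root. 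The first assertion of (i) thus reduces to the pointwise algebraic inequality
\[
(\sqrt{\det A} - m_0)^2 \leq C(m_0)\, \|A - g_0\|^2
\]
for every $2\times 2$ symmetric positive-definite matrix $A$, where $\|\cdot\|$ is the Frobenius norm, matching the paper's definition on metrics.

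To prove this algebraic inequality I would split into the regimes $\|A - g_0\| \leq 1$ and $\|A - g_0\| > 1$. In the small regime, the cofactor identity
\[
\det(g_0 + H) - m_0^2 = \mathrm{tr}(\mathrm{adj}(g_0) H) + \det H,
\]
combined with $|\det H| \leq \tfrac12 \|H\|^2 \leq \tfrac12 \|H\|$ and $|\mathrm{tr}(\mathrm{adj}(g_0) H)| \leq C(m_0) \|H\|$, yields $|\det A - m_0^2| \leq C(m_0) \|H\|$; the identity $|\sqrt{\det A} - m_0| = |\det A - m_0^2|/(\sqrt{\det A} + m_0) \leq |\det A - m_0^2|/m_0$ then gives the claim. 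In the large regime, the crude estimate $\sqrt{\det A} \leq \|A\| \leq \|A - g_0\| + \|g_0\|$ (using $\sqrt{\lambda_1\lambda_2} \leq \sqrt{\lambda_1^2 + \lambda_2^2}$ for nonnegative eigenvalues) leads to $(\sqrt{\det A} - m_0)^2 \leq (\|A - g_0\| + C(m_0))^2 \leq C(m_0)\|A - g_0\|^2$. The second assertion of (i) then follows immediately by inserting $\|g - g_0\|_{L^\infty(B_1\setminus B_h)}^2 \leq I_h^\infty(y) \leq 2C^* h^2 |\log h|$.

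For (ii), the plan is to combine a one-sided pointwise upper bound on $\Gamma_{11}$ with a bound on its signed integral coming from Lemma~\ref{lem:Linf}. From the basis computation above, the $(1,1)$-entry of $g - g_0$ equals $\Gamma_{11}^2 + G^2 \Gamma_{21}^2 - 1$, so
\[
\Gamma_{11}^2 \leq 1 + \|g - g_0\|_{L^\infty(B_1 \setminus B_h)} \leq 1 + C h|\log h|^{1/2},
\]
whence $(\Gamma_{11} - 1)_+ \leq C h|\log h|^{1/2}$ pointwise on $B_1 \setminus B_h$. For fixed $\vartheta \in S^1$, applying Lemma~\ref{lem:Linf} at $\rho = R$ and $\rho = 2h$ gives
\[
\int_{2h}^R (\Gamma_{11} - 1)\, d\rho = r(R, \vartheta) - r(2h, \vartheta) - (R - 2h) = O(h|\log h|^{1/2}).
\]
Using the identity $|x| = 2 x_+ - x$, one then concludes
\[
\int_{2h}^R |\Gamma_{11} - 1|\, d\rho = 2 \int_{2h}^R (\Gamma_{11} - 1)_+\, d\rho - \int_{2h}^R (\Gamma_{11} - 1)\, d\rho \leq 2(R - 2h)\, C h|\log h|^{1/2} + O(h|\log h|^{1/2}) \leq C h|\log h|^{1/2},
\]
since $R \leq 1$. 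The main technical ingredient is the algebraic inequality underlying (i); the key step in (ii) is the decomposition $|x| = 2 x_+ - x$, which converts the pointwise one-sided bound on $(\Gamma_{11} - 1)_+$ and the signed-integral bound from Lemma~\ref{lem:Linf} into an $L^1$-bound on $|\Gamma_{11} - 1|$ without requiring any pointwise lower bound on $\Gamma_{11}$.
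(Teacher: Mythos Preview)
Your proof is correct and follows essentially the same route as the paper. Both arguments compute the matrix representation of $g$ in the $(\d\rho,\rho\,\d\vartheta)$ basis as $\Gamma^T\,\mathrm{diag}(1,G^2)\,\Gamma$, reduce (i) to an algebraic determinant inequality, and for (ii) combine a pointwise one-sided bound on $(\partial_\rho r-1)_+$ with the signed-integral control from Lemma~\ref{lem:Linf}. The only differences are cosmetic: the paper normalizes by $\bar m_0^{-1}$ to reduce to $|\sqrt{\det(Q^TQ)}-1|^2\leq |Q^TQ-\id|^2$ via diagonalization, whereas you prove the algebraic inequality directly by a small/large case split; and the paper derives $(\partial_\rho r-1)_+\leq C\|g-g_0\|$ from $\dist(Q,SO(2))\leq C\|g-g_0\|$, while you read it off more directly from the $(1,1)$-entry $\Gamma_{11}^2+G^2\Gamma_{21}^2-1$.
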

\begin{proof}
First we introduce the notation 
\[
\bar m_0=\left(\begin{array}{cc}1 & 0 \\ 0&
    m_0\end{array}\right),\quad\bar G=\left(\begin{array}{cc}1 & 0 \\ 0&
    G\end{array}\right)\,.
\]
By the definition of $g_0$ and Lemma \ref{lem:gform},  
\[
g_0=(\d\rho,\rho\d \vartheta)\otimes\bar m_0^2\left(\begin{array}{c}\d\rho \\ 
    \rho\d\vartheta\end{array}\right),\quad
g=(\d r, r \d \varphi)\otimes\bar G^2\left(\begin{array}{c}\d r  \\
     r \d\varphi\end{array}\right)\,.
\]
By definition of the matrix norm $\|\cdot\|$ (cf.~section \ref{sec:notat-prel}), we get
\[
\left\|g-g_0\right\|^2
=  
\left|\Gm^T\bar G^2\Gm-\bar m_0^2\right|^2\,,
\]
where on the right hand side, $|\cdot|$ denotes the usual Euclidean matrix norm.
By multiplying $\Gm^T\bar G^2\Gm-\bar m_0^2$ from the left and right with
$\bar m_0^{-1}$, we alter
the norm of this expression at most by a factor $m_0^2$, and get
\renewcommand{\M}{Q^T Q}
\[
\left|\M-\id\right|^2\leq C\left\|g-g_0\right\|^2\,,
\]
where 
\[
Q=\bar G\Gamma \bar m_0^{-1}\,,
\]
and the constant on the right hand side only depends on $m_0$.
Now we  write down the spectral decomposition of the symmetric matrix $\M$,
\[
\M=\,R^T\,\bar D\,R\,,
\]
where $\bar D=\diag(d_1,d_2)$ is some diagonal matrix and  $R\in SO(2)$.
By $\|A\|=\|R^TAR\|$ for all $A\in \R^{2\times 2}$ and $R\in SO(2)$, and $\det
\M=d_1d_2$ we easily deduce
\[
|\sqrt{\det \M}-1|^2\leq |\M-\id|^2
\]
and thus
\begin{equation}
\left(\frac{G\det\Gm}{m_0}-1\right)^2
\leq C \left\|g-g_0\right\|^2\,.\label{eq:3}
\end{equation}
This proves the first claim of (i), and the second claim of (i) follows trivially.\\
Next we note 
\[
\begin{split}
|\M-\id|^2=&|(\sqrt{\M}-\id)(\sqrt{\M}+\id)|^2\\
\geq &  |\sqrt{\M}-\id|^2= 
\dist^2(Q,O(2))
\end{split}
\]
where the inequality holds by $\sqrt{\M}+\id\geq\id$ in the sense of positive definite
matrices. Hence we have shown
\begin{equation*}
\dist^2(Q,O(2))\leq C \|g-g_0\|^2\,.
\end{equation*}
In fact, since $\det Q>0$, we even have
\begin{equation}
\dist^2(Q,SO(2))\leq C \|g-g_0\|^2\,.\label{eq:30}
\end{equation}
 Explicitly, $Q$ reads 
\begin{equation}
Q=\left(\begin{array}{cc}
\partial_\rho r & m_0^{-1} \rho^{-1} \partial_\vartheta r\\
rG \partial_\rho\varphi & rG m_0^{-1}\rho^{-1}
\partial_{\vartheta}\varphi\end{array}\right)\,.\label{eq:29}
\end{equation}
We introduce the notation $(\cdot)_+=\max(\cdot,0)$, $(\cdot)_-=\max(-\cdot,0)$.
From \eqref{eq:30} and \eqref{eq:29}, we get in particular
\[
\left(\partial_\rho r-1\right)_+\leq C\|g-g_0\|\leq C h|\log h|^{1/2}\,.
\]
As in the  proof of Lemma \ref{lem:Linf}, we set $r_0:=\sup_{\partial B_{2h}} r$ and note that $\osc{B_{2h}}r\leq Ch$. Obviously, we have $\partial_\rho(r-\rho-r_0)=\partial_\rho r-1$.
Hence, for every $R$ with $2h\leq R\leq 1$, and fixed $\vartheta\in S^1$ (the arguments $\rho,\vartheta$ being omitted in the notation),
\[
\begin{split}
  \int_{2h}^R  (\partial_\rho r-1)_-\d\rho\leq &\sup |r-\rho-r_0 |+\osc{\partial B_{2h}}r+\int_{2h}^R  (\partial_\rho r-1)_+\d \rho\\
  \leq & C h|\log h|^{1/2}
\end{split}
\]
Hence we get 
\[
\begin{split}
  \int_{2h}^R \left|\partial_\rho r-1\right|\d\rho=& \int_{2h}^R
  \left(\left(\partial_\rho r-1\right)_++
    \left(\partial_\rho r-1\right)_-\right)\d\rho\\
  \leq & Ch|\log h|^{1/2}\,.
\end{split}
\]
This proves (ii) and completes the proof of the present lemma.
\end{proof}

We will  need
to distinguish balls in $B_1$ and in $N\subset T_0B_1$, and hence we write
\[
\begin{split}
B_{R}=&\{x\in B_1:\rho(x)< R\}\\
\tilde B_{R}=&\{x\in B_1:r(x)< R\}\,.
\end{split}
\]
Also, for a measurable set $V\subset M$, let 
\newcommand{\Ro}{{R_0}}
\[
\K(V)=\int_{V}K\d A\,.
\]
In the statement of the following proposition let, as in the proof of Lemma \ref{lem:detestim},
\[
r_0
=\sup_{\vartheta\in S^1}r(2h,\vartheta)\,.
\]
As a further piece of notation, we set
\[
r^*:=1-2h+r_0-C_1 h|\log h|^{1/2}\,,
\]
where $C_1$ is the constant from the right hand side of \eqref{eq:43}. By this choice, $\tilde B_{r^*}\subset B_1$.
\begin{proposition}
\label{prop:kappaL1}
Let $y\in\A$ with $I_h(y)\leq 2 C^* h^2|\log h|$. Then for $R\in[C_1h|\log h|^{1/2},1-2h-C_1h|\log h|^{1/2}]$,
\[
\int_{2h}^{2h+R}\d \rho\left|\K(B_\rho)-2\pi(1-m_0)\right|\leq
C R^{1/2}h^{1/2} |\log h|^{3/4}\,.
\]
\end{proposition}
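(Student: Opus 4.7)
The plan is to pass to geodesic polar coordinates $(r,\varphi)$, identify (up to a small annular error) the deviation $\K(B_\rho)-2\pi(1-m_0)$ with the derivative of
\[
\phi(r):=F(r)-2\pi m_0(r-r_0),\quad F(r):=\int_{S^1}rG(r,\varphi)\,\d\varphi,
\]
and then bound $\|\phi'\|_{L^1}$ via the standard interpolation between an $L^\infty$ bound on $\phi$ (from the membrane energy) and an $L^1$ bound on $\phi''$ (from the bending energy), as already hinted at in the introduction.

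First, by Lemma \ref{lem:Linf} the symmetric difference $B_\rho\triangle\tilde B_{\rho+r_0}$ is contained in a thin annulus of width $\delta:=C_1h|\log h|^{1/2}$, giving $|\K(B_\rho)-\K(\tilde B_{\rho+r_0})|\le \int_{B_\rho\triangle\tilde B_{\rho+r_0}}|K|\,\d A$. From Lemma \ref{lem:gform} and the Jacobi equation in its proof, $\K(\tilde B_r)=\int_{S^1}\Omega(r,\varphi)\,\d\varphi=2\pi-F'(r)$, hence $\K(\tilde B_{\rho+r_0})-2\pi(1-m_0)=-\phi'(\rho+r_0)$. The task therefore reduces to estimating $\|\phi'\|_{L^1(I)}$ on $I:=[2h+r_0,2h+r_0+R]$, modulo an annular error which, by Fubini together with $\int_{B_1}|K|\,\d A\le C|\log h|$, is at most $C\delta|\log h|$.

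For the bending input, $\phi''(r)=-\int_{S^1}K\cdot rG\,\d\varphi$, so
\[
\|\phi''\|_{L^1(I)}\le \int_{B_1}|K|\,\d A\le C\|D\nu\|_{L^2(B_1)}^2\le C|\log h|,
\]
using $|K|\lesssim|D\nu|^2$ pointwise and $h^2\|D\nu\|_{L^2}^2\le 2C^*h^2|\log h|$. For the membrane input, one should obtain $\|\phi\|_{L^\infty(I)}\le Ch|\log h|^{1/2}$ by comparing $F(r)$, the length of the geodesic circle $\partial\tilde B_r$, to the length $L_g(\partial B_{r-r_0})=\int_{S^1}\sqrt{g_{\vartheta\vartheta}(r-r_0,\vartheta)}\,\d\vartheta$ of a reference circle in the $g$-metric: the second is close to $2\pi m_0(r-r_0)$ up to $Ch|\log h|^{1/2}$ directly from the $L^\infty$ bound on $g-g_0$, and the comparison $|F(r)-L_g(\partial B_{r-r_0})|\le Ch|\log h|^{1/2}$ uses Lemma \ref{lem:detestim} (the pointwise control of $G\det\Gamma-m_0$ together with the $L^1$ control of $\partial_\rho r-1$) to relate the two curves.

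With these inputs in hand, I would partition $I$ into subintervals of length $\ell$, apply the elementary inequality $\|\phi'\|_{L^1(J)}\le 2\|\phi\|_{L^\infty(J)}+\ell\|\phi''\|_{L^1(J)}$ on each piece $J$, sum over the $R/\ell$ pieces, and optimize in $\ell$ to obtain
\[
\|\phi'\|_{L^1(I)}\le C\sqrt{R\,\|\phi\|_{L^\infty(I)}\,\|\phi''\|_{L^1(I)}}\le CR^{1/2}h^{1/2}|\log h|^{3/4}.
\]
The main obstacle is the pointwise $L^\infty$ bound on $\phi$: although Lemma \ref{lem:detestim}(i) controls $G\det\Gamma-m_0$ pointwise, upgrading this to pointwise control on $F(r)-2\pi m_0(r-r_0)$ requires comparing the tangent directions of the families $\{r=\mathrm{const}\}$ and $\{\rho=\mathrm{const}\}$. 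This ultimately reduces to showing that the closest rotation in $SO(2)$ to the Jacobian $Q=\bar G\Gamma\bar m_0^{-1}$ is close to the identity, a consequence of the $L^1$ estimate on $\partial_\rho r-1$ in Lemma \ref{lem:detestim}(ii).
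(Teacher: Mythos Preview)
Your overall architecture is exactly that of the paper: pass to geodesic polar coordinates, set up the auxiliary function (your $\phi$ is the paper's $m_0 f$), identify $\phi'$ with $2\pi(1-m_0)-\K(\tilde B_r)$, bound $\|\phi''\|_{L^1}$ by the bending energy, interpolate, and finally absorb the annular error $\K(B_\rho)-\K(\tilde B_{\rho+r_0})$ via Fubini and Lemma~\ref{lem:Linf}. All of this is correct.

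The gap is your claimed $L^\infty$ bound $\|\phi\|_{L^\infty(I)}\le Ch|\log h|^{1/2}$. Your proposed route---compare the $g$-length of the geodesic circle $\partial\tilde B_r$ with $L_g(\partial B_{r-r_0})$, then reduce to the rotation angle of $Q$ being close to zero---does not go through with the available estimates. Lemma~\ref{lem:detestim}(ii) controls $\partial_\rho r-1$ only in $L^1$ \emph{in the $\rho$-variable}, for each fixed $\vartheta$; it gives no pointwise control on $\partial_\rho r$ (equivalently on the rotation angle of $Q$) along a single level curve $\{r=\mathrm{const}\}$. Geometrically, Lemma~\ref{lem:Linf} pins $\partial\tilde B_r$ into a thin annulus of width $\delta$, but a simple closed curve in a thin annulus can have arbitrarily large length: the Hausdorff-closeness of $\partial\tilde B_r$ to $\partial B_{r-r_0}$ does not bound the length difference for a fixed $r$. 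One does get the one-sided bound $\phi(r)\ge -Ch|\log h|^{1/2}$ (the inner circumference is a lower barrier), but the upper bound fails for the reasons above.

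The paper sidesteps this by proving only the $L^1$ bound
\[
\|\phi\|_{L^1(r_0,r_0+R)}\le C h|\log h|^{1/2}\,R,
\]
which is exactly what the $L^1_\rho$ control in Lemma~\ref{lem:detestim}(ii) is suited for. Concretely, one writes
\[
\phi(r)=m_0\,r\int_{S^1}\Big(\tfrac{G}{m_0}-\tfrac{r-r_0}{r}\,\partial_\varphi\vartheta\Big)\,\d\varphi,
\]
uses the identity $\partial_\varphi\vartheta=\det\tilde\Gamma\cdot r\rho^{-1}\partial_\rho r$ to pull out a factor $\det\tilde\Gamma$, and splits the integrand into the three pieces $|G\det\Gamma/m_0-1|$, $|(r-r_0)/\rho|\,|\partial_\rho r-1|$, and $|(r-r_0-\rho)/\rho|$. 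After integrating in $r$ and changing variables to $(\rho,\vartheta)$, these are controlled respectively by Lemma~\ref{lem:detestim}(i), Lemma~\ref{lem:detestim}(ii), and Lemma~\ref{lem:Linf}. The interpolation step is then the standard $L^1$--$L^1$ inequality \eqref{eq:8}, $\|\phi'\|_{L^1}\le C\|\phi\|_{L^1}^{1/2}\|\phi''\|_{L^1}^{1/2}$, which yields the same $CR^{1/2}h^{1/2}|\log h|^{3/4}$ without ever needing pointwise control on $\phi$. Your partition-and-optimize interpolation would equally well accept the $L^1$ input in place of the $L^\infty$ one (replace $2\|\phi\|_{L^\infty(J)}$ by $\tfrac{2}{\ell}\|\phi\|_{L^1(J)}$), so only the membrane step needs to be redone.
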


\begin{proof}
Let
\[
\rho=\rho(r,\varphi),\quad \vartheta=\vartheta(r,\varphi)
\]
be understood as functions of the coordinates  $(r,\varphi)$.
By Lemma \ref{lem:Linf},
\begin{equation}
\label{eq:13}
\sup_{r\in[r_0,r^*],\,\varphi\in S^1} |\rho(r,\varphi)+r_0-r|\leq C h|\log h|^{1/2}\,.
\end{equation}
Now define
\[
\begin{split}
  f:[r_0,r^*]\to &\R\\
  r\mapsto &\left(r\int_{\partial {\tilde B}_r}\d\varphi \frac{G}{m_0}\right)-2\pi
  (r-r_0)\,.
\end{split}
\]
We may rewrite 
\[
f(r)=r\int_{\partial
  {\tilde B}_r}\d\varphi\left(\frac{G}{m_0}-\frac{r-r_0}{r}\partial_\varphi\vartheta\right)\,.
\]
Note that
\[
\begin{split}
  f'(r)=&\left(\frac{1}{m_0}\int_{\partial {\tilde B}_r}\d\varphi \Omega\right)-2\pi
=\left(\frac{1}{m_0}\int_{\tilde B_r} K\d A\right)-2\pi\\
f''(r)=& \frac{1}{m_0} \int_{\partial {\tilde B}_r}\d\varphi KG r\,.
\end{split}
\]
By the Gauss equation,
\[
K=\frac{\det D\nu^TDy}{\det D y^TDy}\,,
\]
and since for all $x\in B_1$, the image of $D\nu|_x$ is contained in the image of $Dy|_x$, we have as a consequence
\[
|K|\d A=\frac{\det D\nu^TDy}{\sqrt{\det D y^TDy}}\d x=\sqrt{\det D\nu^TD\nu}\d x\,.
\]
Now we can estimate $\|f''\|_{L^1(r_0,r^*)}$ as follows,
\begin{equation}
\label{eq:15}
\begin{split}
\|f''\|_{L^1(r_0,r^*)}=&
\int_{r_0}^{r^*}\d r\d\varphi |KG r|\\
=&\int_{\tilde B_{r^*}\setminus \tilde B_{r_0}}|K|\d A\\
= & \int_{\tilde B_{r^*}\setminus \tilde B_{r_0}}\d x\sqrt{\det D\nu^TD\nu}\\
\leq & \int_{B_1} \d x
|D\nu|^2\\
\leq & C|\log h|\,,
\end{split}
\end{equation}
where we have used the  upper bound for
the bending energy in the last inequality.\\
For the application of the  standard interpolation inequality \eqref{eq:8}, we need to estimate the
$L^1$ norm of $f$. More precisely, we will estimate $\|f\|_{L^1(r_0,r_0+R)}$:
\begin{equation}
\begin{split}
  \int_{r_0}^{r_0+R}\d r |f| \leq& \int_{r_0}^{r_0+R}r\d r\left|\int\d\varphi
    \left(\frac{G}{m_0}-\frac{r-r_0}{r}\partial_\varphi\vartheta\right)\right|\\
  \leq & \int_{r_0}^{r_0+R}r\d r\left|\int\d\varphi
    \left(\frac{G}{m_0}-\frac{r-r_0}{r}\frac{r}{\rho}\det\tilde\Gamma\partial_\rho r\right)\right|\\
  \leq & \int_{r_0}^{r_0+R}r\d r\left|\int\d\varphi\det\tilde\Gamma
  \left(\frac{G\det\Gamma}{m_0}-\frac{r-r_0}{\rho}\partial_\rho
    r\right)\right|\\
\leq & \int_{r_0}^{r_0+R}r\d r\d\varphi\det\tilde\Gamma\Bigg(
  \left|\frac{G\det\Gamma}{m_0}-1\right|\\
&+
\left|\frac{r-r_0}{\rho}\right|\left|\partial_\rho
      r-1\right|+\left|\frac{r-r_0-\rho}{\rho}\right|\Bigg)
\,,
 \end{split}\label{eq:36}
\end{equation}
where we used \eqref{eq:31} in the second
estimate to obtain the relation $\partial_\varphi\vartheta=\det\tilde \Gamma
r\rho^{-1}\partial_\rho r$. 
We will estimate the three terms in the integrand on the right hand side
separately. 
Before we do so, note that by Lemma \ref{lem:Linf}, 
\[
 \tilde B_{r_0+R}\subset B\left(0,R+2h+Ch|\log h|^{1/2}\right)
\]
and hence 
\[
\begin{split}
  \L^2({\tilde B}_{R+r_0})\leq &C\left(R+h|\log h|^{1/2}\right)^2\\
  \leq & CR^2\,.
\end{split}
\]
Now we estimate the terms on the right hand side in \eqref{eq:36}. Firstly,
\[
\begin{split}
  \int_{r_0}^{r_0+R}r\d r\d\varphi\det\tilde\Gamma
\left|\frac{G\det\Gamma}{m_0}-1\right|\leq &
  \left\|\frac{G\det\Gamma}{m_0}-1\right\|_{L^\infty(B_1\setminus B_h)}
\L^2({\tilde B}_{r_0+R})\\
\leq & C h |\log h|^{1/2}R^2\,,
\end{split}
\]
where in the second inequality, we have used the assumption $I_h^\infty(y)\leq 2C^* h^2|\log h|$ and Lemma \ref{lem:detestim} (i).
Secondly, setting $\tilde R_h=R+2h+C_1h|\log h|^{1/2})$, 
\begin{equation}
  \begin{split}
    \int_{r_0}^{r_0+R}r\d
    r\d\varphi& \det\tilde\Gamma
    \left|\frac{r-r_0}{\rho}\right|\left|\partial_\rho
      r-1\right|\label{eq:37}\\
\leq & \sup_{r\geq r_0}\left|\frac{r-r_0}{\rho}\right|
\int_{2h}^{\tilde R_h}\rho\d \rho\int_{S^1}\d\vartheta 
 \left|\partial_\rho
      r-1\right|\\
\leq & C \tilde R_h\int_{2h}^{\tilde R_h}\d \rho
 \left|\partial_\rho r-1\right|\\
\leq &CR h|\log h|^{1/2}\,,
  \end{split}
\end{equation}
where in the last inequality, we have used Lemma \ref{lem:detestim} (ii).
Thirdly, 
\[
\begin{split}
  \int_{r_0}^{r_0+R}r\d r\d\varphi\det\tilde\Gamma
\left|\frac{r-r_0-\rho}{\rho}\right|\leq &
\int_{2h}^{\tilde R_h}\rho\d\rho\int_{S^1}\d\vartheta\left|\frac{r-r_0-\rho}{\rho}\right|\\
=&C\int_{2h}^{\tilde R_h}\d\rho\left|r-r_0-\rho\right|\\
\leq &C\tilde R_h\sup|r-r_0-\rho| \\
\leq &Ch|\log h|^{1/2}R\,.
\end{split}
\]
Inserting the preceding estimates into (\ref{eq:36}), we get
\begin{equation}
\|f\|_{L^1(r_0,r_0+R)}\leq C h |\log h|^{1/2} R\,.\label{eq:38}
\end{equation}
Using (\ref{eq:15}) and (\ref{eq:38}) in  the  interpolation inequality
\eqref{eq:8} we obtain
\begin{equation}
\|\K(\tilde B_\rho)-2\pi(1-m_0)\|_{L^1(r_0,r_0+R)}=\|f'\|_{L^1(r_0,r_0+R)}\leq C h^{1/2}R^{1/2} |\log h|^{3/4}\,.\label{eq:39}
\end{equation}
Comparing this last estimate with the statement of the present proposition, we
see that what remains to be done   is a change of domain for the integration of $K\d A$, from $ \tilde B_{\bar r}=\{x\in B_1:r(x)<\bar r\}$
 to 
$B_{\bar\rho}=\{x\in B_1:\rho(x)<\bar\rho\}$. We have
\begin{equation}
\label{eq:16}
\begin{split}
\int_{2h}^{2h+R}\d\rho\left|\K(B_\rho)-2\pi(1-m_0)\right|
\leq & \int_{2h}^{2h+R}\d\rho\Bigg(
\left|\K(B_\rho)-\K(\tilde B_{\rho+r_0-2h})\right|\\
&+\left|\K(\tilde B_{\rho+r_0-2h})-2\pi(1-m_0)\right|\Bigg)\,.
\end{split}
\end{equation}
In the following estimate we will use the notation $A\Delta B=(A\setminus B)\cup (B\setminus A)$ for the symmetric difference of sets $A,B$, and the characteristic function of a set $A$ is denoted by $\chi_A$.
The first term on the right hand side of (\ref{eq:16}) can be estimated as follows,
\begin{equation}
\begin{split}
\int_{2h}^{2h+R}\d\rho
\left|\K(B_\rho)-\K(\tilde B_{\rho+r_0-2h})\right|
\leq & \int_{2h}^{2h+R} \d\rho \int_{{\tilde B}_{\rho+r_0-2h}\Delta B_\rho}|K|G r\d r\d\varphi
\\
=&\int_{2h}^{2h+R} \left(\int \d\rho\, \chi_{{\tilde B}_{\rho+r_0-2h}\Delta
    B_\rho}(r,\varphi)\right)|K|G r\d r\d\varphi\\
\leq & C h|\log h|^{3/2}\,.
\end{split}\label{eq:44}
\end{equation}
where in the first estimate, we have used the definition of $\K$, in the
second line we have used Fubini's Theorem, and in the last inequality, we have
used Lemma \ref{lem:Linf} and (\ref{eq:15}). The second term on the right
hand side of (\ref{eq:16}) can be estimated by a trivial change of variables and~(\ref{eq:39})
\begin{equation}
\begin{split}
\int_{2h}^{2h+R}\d\rho\left|\K( \tilde B_{\rho+r_0-2h})-2\pi(1-m_0)\right|=
& \int_{r_0}^{r_0+R} \d r \left|\K( \tilde B_r)-2\pi(1-m_0)\right|\\
\leq & C h^{1/2}R^{1/2}|\log h|^{3/4}\,.
\end{split}\label{eq:45}
\end{equation}

Inserting \eqref{eq:44} and \eqref{eq:45} into \eqref{eq:16} concludes the proof of the present proposition.
\end{proof}



\section{An isoperimetric inequality on the sphere}
\label{sec:an-isop-ineq}
In the proof of our main theorem, we are going to need a certain isoperimetric
inequality for the image of the Gauss map. We will need several items from the literature to prove our claim in Section \ref{sec:an-isop-ineq-1}, Lemma \ref{lem:H1lb} below; they will be collected in Sections \ref{sec:isop-ineq-sets}-~\ref{sec:space-bvs2}.
\subsection{The isoperimetric inequality for sets on the sphere}
\label{sec:isop-ineq-sets}
Let $S^2=\{x\in \R^3:|x|=1\}$.
We define $F:[0,4\pi]\to\R$ by 
\[
F(x)=\sqrt{4\pi x -x^2}\,.
\]
Note that $F$ is
concave. The isoperimetric inequality for sets on the sphere is as follows:
\begin{theorem}[\cite{MR1511289}]
\label{thm:isosphere}
Let $A\subset S^2$ be open. Then
\[
\H^1(\partial A)\geq F(\H^2(A))\,.
\]
\end{theorem}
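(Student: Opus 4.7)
The plan is to reduce the inequality to the case where $A$ is itself a spherical cap, and then to observe that the right-hand side is precisely the perimeter of a cap of area $\H^2(A)$. Both halves are classical, and in the paper I would simply invoke \cite{MR1511289}; the sketch below is how I would reconstruct a proof.

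First I would introduce \emph{spherical cap symmetrization} about a chosen pole $p\in S^2$: for each geodesic radius $r\in(0,\pi)$, one replaces $A\cap\partial B_g(p,r)$ by a single arc of the same $\H^1$ length, centered at a prescribed meridian. Call the resulting set $A^*$. By construction $A^*$ is axially symmetric and $\H^2(A^*)=\H^2(A)$. The key claim is that this operation is perimeter non-increasing,
\[
\H^1(\partial A^*)\leq \H^1(\partial A).
\]
I would prove this via the coarea formula applied to the geodesic distance $r(\cdot)=d_{S^2}(p,\cdot)$: on almost every latitude circle $\partial B_g(p,r)$, the number of components of $A\cap\partial B_g(p,r)$ is at least one, and is minimized (equal to one) precisely after rearrangement; the co-area formula then bounds $\H^1(\partial A)$ below by the corresponding integral for $A^*$. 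The standard way to make this rigorous for arbitrary open sets is to approximate $A$ by sets with smooth boundary, apply the rearrangement argument in the smooth category, and use the lower semicontinuity of perimeter under $L^1$ convergence.

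Second, a direct computation handles the symmetric case. Writing $\theta_0\in [0,\pi]$ for the polar angle of the cap $A^*$ one has $\H^2(A^*)=2\pi(1-\cos\theta_0)$ and $\H^1(\partial A^*)=2\pi\sin\theta_0$. Setting $x=\H^2(A^*)$, so that $\cos\theta_0=1-x/(2\pi)$, a short calculation gives
\[
\H^1(\partial A^*)^2 = 4\pi^2\sin^2\theta_0 = 4\pi^2\bigl(1-(1-x/(2\pi))^2\bigr) = 4\pi x - x^2 = F(x)^2.
\]
Combining this identity with the symmetrization inequality, and noting that $F$ is symmetric about $x=2\pi$ (so the argument may be applied to $S^2\setminus A$ when $\H^2(A)>2\pi$), yields $\H^1(\partial A)\geq F(\H^2(A))$.

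The main obstacle is the rigorous proof that cap symmetrization is perimeter non-increasing for general open (equivalently, for finite-perimeter) sets on $S^2$. In Euclidean space this is the classical P\'olya--Szeg\H{o} rearrangement theorem, and the same scheme can be transported to $S^2$ either via stereographic projection (carrying the weighted coarea formula through the conformal factor) or by working intrinsically with the coarea formula on the sphere. Since this lemma is standard and a self-contained proof can be found in \cite{MR1511289}, I would cite it rather than reproduce the symmetrization argument in full.
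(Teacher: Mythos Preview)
The paper does not give any proof of this theorem; it is simply stated with the citation \cite{MR1511289} and used as a black box in Lemma~\ref{lem:H1lb}. So there is nothing to compare your argument to on the paper's side.

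That said, your sketch contains a gap worth flagging. The ``cap symmetrization'' you describe --- replacing $A\cap\partial B_g(p,r)$ on each latitude by a single centered arc of the same length --- produces an \emph{axially symmetric} set $A^*$, not a spherical cap. An axially symmetric open set can still have a complicated profile (e.g.\ an annular band, or several bands), and for such sets the boundary is not a single latitude circle, so your direct computation $\H^1(\partial A^*)=2\pi\sin\theta_0$ does not apply. You still need a second reduction: among axially symmetric sets of prescribed area, the cap minimizes perimeter. This can be done (it becomes a one-dimensional rearrangement problem for the profile function, or one iterates Steiner symmetrizations about different axes and passes to a limit), but it is a genuine additional step, not the ``direct computation'' you announce. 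If you intend to cite \cite{MR1511289} anyway, this is harmless; if you want a self-contained argument, that step needs to be filled in.
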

\subsection{The Brouwer degree}
\label{sec:brouwer-degree}
The statement of the isoperimetric inequality will involve the Brouwer
degree. We mention some basic facts about this object -- for a more thorough
exposition with proofs of the claims made here, see e.g.~\cite{MR1373430}.\\
Let $M$ be a paracompact oriented manifold of dimension $n$, and $U$ a bounded subset of $\R^n$. Further, let $u\in C^\infty(\overline U;M)$. Assume that $y\in M\setminus u(\partial U)$, and let $\mu$ be a $C^\infty$ $n$-form on $M$ with support in the same connected component of $M\setminus u(\partial U)$ as $y$, such that $\int_M \mu= 1$. Then the degree is defined by 
\[
\deg(y,U,u)=\int_U u^*(\mu)\,,
\]
where $u^*$ is the pull-back by $u$. It can be shown that this definition is independent of the choice of $\mu$. Further, $\deg(\cdot,U,u)$ is constant on connected components of $M\setminus u(\partial U)$ and integer valued. Moreover,  it is invariant under homotopies, i.e., given $H\in C^\infty([0,1]\times \overline U, M)$ such that $y\not \in H([0,1],\partial U)$, we have
\[
\deg(y,U,H(0,\cdot))=\deg(y,U,H(1,\cdot))\,.
\]
Using these facts, one can go on to define the degree for $u\in C^0(\overline
U,M)$ by approximation.\\
Since the reader may be more familiar with a different way of defining the
degree,   we mention that for points $y\in
M\setminus u(\partial U)$ that satisfy $\det D u(x)\neq 0$ for all
$x\in u^{-1}(\{y\})$, it may be shown that
\[
\deg(y,U, u)=\sum_{x\in  u^{-1}(\{y\})}\sgn \det D u(x)\,.
\]
This can also be used as  a starting point to define the degree.

If $ u\in C^1(\overline U;M)$, and $\L^n(\partial U)=0$, then  it follows from the defining  formula for the degree and approximation by smooth functions that
\begin{equation}
\int_{M}\deg(y,U, u)\mu=\int_U  u^*(\mu)
\label{eq:51}
\end{equation}
for any $n$-form $\mu$ that can be written as 
\[
\mu(y)=\varphi(y) \d y_1\wedge\dots\wedge \d y_n
\]
in local coordinates $(y_1,\dots,y_n)$, with $\varphi\in L^\infty(M)$.
{
Assuming that $M\subset\R^m$ with $m\geq n$, the above formula implies
\[
\int_M|\deg(y,U,u)|\d\H^n(y)\leq \int_U|\det Du(x)|\d\L^n(x)\,.
\]
We conclude that $\deg(\cdot,U,u)\in L^1(M)$ for $u\in
C^1(\overline U;M)$.
}
If $\mu$ in \eqref{eq:51} is an exact form, i.e.,
\[
\mu= \d \omega
\]
for some $n-1$ form $\omega$ on $M$, then
\[
 u^*(\d \omega)= \d \left( u^* \omega\right)\,.
\]
If $U$ has smooth boundary,
this implies, by Stokes' Theorem,

\begin{equation}
\begin{split}
  \int_{M}\deg(y,U, u)\d \omega=&\int_U \d \left( u^* \omega\right)\\
    =&\int_{\partial U}  u^* \omega \,.
  \end{split}\label{eq:57}
  \end{equation}
\subsection{The space $BV(S^2)$}
\label{sec:space-bvs2}
In the following, we are going to consider the space $L^1(S^2)\equiv L^1(S^2,\H^2)$, where  $\H^2$ is the 2-dimensional Hausdorff measure.
For $f\in L^1(S^2)$, let
\begin{equation}
  \|Df\|(S^2):=\inf\left\{\liminf_{\e\to 0} \int_{S^2}|Df_\e|\d \H^2:\,f_\e\in \Lip(S^2), f_\e\to f \in L^1(S^2)\right\}\,.
\end{equation}
Then $BV(S^2)$, the space of functions of bounded variation on the 2-sphere, is the set of $f\in L^1(S^2)$ with $\|Df\|(S^2)<\infty$ (see \cite{ambrosio2001some,miranda2003functions}, where functions of bounded variation are defined for   much more general measure spaces).\\ 
We will need some basic facts about $BV(S^2)$, that we state in Lemmas \ref{lem:BVprop}, \ref{lem:coarea} below. We first need to collect some notation for the  statement of the first part of Lemma \ref{lem:BVprop}.\\
For $x\in S^2$, the space $T_xS^2$ is naturally equipped with the norm coming from the standard metric $\bar g$ on $S^2$,
\[
\|v\|_{T_xS^2}=\sqrt{\bar g|_{x}(v,v)} \quad \text{ for } v\in T_xS^2\,.
\]
The space $\wedge^1T_xS^2$ is equipped with the dual norm
\[
\|\alpha\|_{\wedge^1T_xS^2}=\sup\{|\alpha(v)|:v\in T_xS^2,\,\|v\|_{T_xS^2}\leq 1\}\quad\text{ for } \alpha\in \wedge^1T_xS^2\,.
\]
Now define  $C^1(S^2;\wedge^1TS^2)$, the space of $C^1$-one forms on $S^2$, to be the set of functions $\omega$ that associate to every $x\in S^2$ an element of $\wedge^1T_xS^2$, with the property that for any coordinate chart $\psi:S^2\supset U\to \R^2$, we have
\[
x\mapsto(\psi_*\omega)|_x (e_i)\in C^1(U) \quad\text{ for } i=1,2,
\]
where $e_i$ denotes the unit vector in $i$-th direction in $\R^2$.
On $C^1(S^2;\wedge^1TS^2)$, we  introduce the  norm $\|\cdot \|_{0,1}$  by setting
\[
\|\omega\|_{0,1}:=\sup_{x\in S^2}\|\omega|_x\|_{\wedge^1T_xS^2}\,.
\]

\begin{lemma}
\label{lem:BVprop}
\begin{itemize}
\item[(i)]
For all $f\in L^1(S^2)$,
\begin{equation*}
\|Df\|(S^2)=\sup\left\{\int_{S^2} f\d \omega:\omega\in C^1(S^2;\wedge^1 TS^2), \,\|\omega\|_{0,1}\leq 1\right\}\,.
\end{equation*}
\item[(ii)] 
If $A\subset S^2$ is open and has Lipschitz boundary, then
\begin{equation*}
\|D\chi_A\|(S^2)=\H^1(\partial A)\,,
\end{equation*}
where $\chi_A$ denotes the characteristic function of the set $A$.
\end{itemize}
\end{lemma}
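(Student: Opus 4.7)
Both parts are standard duality facts in the BV calculus on a compact Riemannian manifold, and the plan is to import the Euclidean argument via Stokes' theorem on the closed surface $S^2$ plus a smoothing step, and then deduce~(ii) from~(i) by choosing test one-forms adapted to $\partial A$. Throughout, write $V(f)$ for the right-hand side of~(i).

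For the inequality $V(f)\leq\|Df\|(S^2)$, take Lipschitz $f_\varepsilon\to f$ in $L^1$ and any admissible $\omega$. Since $\partial S^2=\emptyset$, Stokes' theorem applied to the $2$-form $d(f_\varepsilon\omega)=df_\varepsilon\wedge\omega+f_\varepsilon\,d\omega$ gives $\int_{S^2}f_\varepsilon\,d\omega=-\int_{S^2}df_\varepsilon\wedge\omega$, and the pointwise wedge estimate $|df_\varepsilon\wedge\omega|\leq|Df_\varepsilon|\,\|\omega\|_{0,1}\,d\H^2$ on an oriented $2$-manifold (Cauchy--Schwarz in an orthonormal cotangent frame), together with $f_\varepsilon\to f$ in $L^1$ and continuity of $d\omega$, yields $|\int_{S^2}f\,d\omega|\leq\|\omega\|_{0,1}\liminf_\varepsilon\int_{S^2}|Df_\varepsilon|\,d\H^2$; taking the infimum over approximating sequences and the supremum over $\omega$ finishes this direction.

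For the converse $\|Df\|(S^2)\leq V(f)$ the task is to exhibit, for every $f$ with $V(f)<\infty$, smooth approximants $f_t\to f$ in $L^1$ satisfying $\int_{S^2}|Df_t|\,d\H^2\leq V(f)+o(1)$. I would use either the heat semigroup $f_t:=e^{-t\Delta_{S^2}}f$ --- relying on self-adjointness and a compatible action on one-forms to rewrite $\int_{S^2}f_t\,d\omega=\int_{S^2}f\,d(P_t\omega)$ with $\|P_t\omega\|_{0,1}\leq\|\omega\|_{0,1}$, and on the smooth-function identity $\int|Df_t|\,d\H^2=\sup_\omega\int f_t\,d\omega$ --- or, more concretely, mollification in coordinate charts with a partition of unity. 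Both routes are classical in metric BV theory, see~\cite{ambrosio2001some,miranda2003functions}.

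For~(ii), the bound $\|D\chi_A\|(S^2)\leq\H^1(\partial A)$ follows by approximating $\chi_A$ by the Lipschitz tube functions $\eta_\varepsilon(x):=\min(1,\dist_{S^2}(x,S^2\setminus A)/\varepsilon)$, for which $|D\eta_\varepsilon|\leq 1/\varepsilon$ is supported on a tube of $\H^2$-measure $\varepsilon\H^1(\partial A)+o(\varepsilon)$ (Minkowski content of the Lipschitz curve $\partial A$), so $\int|D\eta_\varepsilon|\,d\H^2\to\H^1(\partial A)$. For the matching lower bound I would combine~(i) with Stokes' theorem on $A$, namely $\int_{S^2}\chi_A\,d\omega=\int_A d\omega=\int_{\partial A}\omega$, and saturate the trivial estimate by a partition-of-unity construction in a tubular neighbourhood of $\partial A$ producing $C^1$ one-forms $\omega_k$ with $\|\omega_k\|_{0,1}\leq 1$ whose restrictions to $\partial A$ approximate the unit tangent covector in $L^1(\partial A,\H^1)$. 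The main technical obstacle is the smoothing step in~(i) --- the compatibility of the mollifier with the cotangent norm on $S^2$ is the only nontrivial ingredient --- whereas the rest reduces to routine integration by parts and tubular bookkeeping.
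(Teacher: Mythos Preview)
Your proposal is correct in outline and would yield a complete proof once the smoothing step is carried out; you have correctly identified that as the only substantive point. The paper's own proof is much terser and takes a different route: it simply quotes the Euclidean duality formula $\|D\tilde f\|(\R^2)=\sup\{\int \tilde f\,\div h:\|h\|_0\le 1\}$ from the literature and then says that (i) follows by pulling back via a smooth atlas and a subordinate partition of unity, and that (ii) follows from (i) together with the Gauss--Green theorem. In other words, the paper does not attempt an intrinsic argument on $S^2$ at all but reduces everything to the known $\R^2$ statement.

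Your approach is more self-contained and geometrically transparent: the easy direction $V(f)\le\|Df\|(S^2)$ via Stokes on the closed surface is clean, and your treatment of (ii) (tube approximation for the upper bound, Stokes on $A$ plus test one-forms for the lower bound) spells out what the paper's one-line ``Gauss--Green'' reference is doing. The price you pay is the smoothing step for the hard direction of (i): the heat-semigroup route requires that the induced operator on one-forms be an $L^\infty$ contraction, which on $S^2$ comes from the Weitzenb\"ock formula and nonnegative Ricci curvature but is not entirely trivial; your alternative of mollification in charts with a partition of unity is exactly the paper's move, so in the end the two arguments converge. Either way the result is standard and both routes are valid.
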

\begin{proof}
For all $\tilde f\in BV(\R^2)$,
\[
\|D\tilde f\|(\R^2)=\sup\left\{\int_{\R^2} f\,\div  h \,\d x: h\in C^1(\R^2;\R^2) ,\,\|h\|_0\leq 1\right\}\,,
\]
where $\|\cdot\|$ denotes the $C^0$-norm.
For a proof of this statement, see e.g.~\cite{MR1857292}. The statement (i)  follows   using a smooth atlas on $S^2$ and a subordinate partition of unity. Statement (ii) follows from (i) and an application of the Gauss-Green Theorem.
\end{proof}
The following coarea formula for $BV$ functions  is taken from \cite{miranda2003functions}, where it is proved for much more general  measure spaces. 
\begin{lemma}[\cite{miranda2003functions}]
\label{lem:coarea}
For all $f\in L^1(S^2)$,
 \begin{equation*}
\|Df\|(S^2)=\int_{-\infty}^\infty\d s \|D\chi_{\{f> s\}}\|(S^2)\,,
\end{equation*}
where $\chi_{\{f>s\}}$ denotes the characteristic function of the set $\{x\in S^2:f(x)>s\}$.
\end{lemma}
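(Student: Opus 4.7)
The plan is to establish the two inequalities
\[
\|Df\|(S^2) \leq \int_{-\infty}^\infty \|D\chi_{\{f>s\}}\|(S^2)\d s \quad \text{and} \quad \int_{-\infty}^\infty \|D\chi_{\{f>s\}}\|(S^2)\d s \leq \|Df\|(S^2)
\]
separately, using the duality characterization of $\|D\cdot\|(S^2)$ from Lemma \ref{lem:BVprop}(i) for the first and approximation by Lipschitz functions together with the classical coarea formula for the second.

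For the first direction, I would fix a test one-form $\omega \in C^1(S^2;\wedge^1 TS^2)$ with $\|\omega\|_{0,1}\leq 1$ and exploit the layer-cake identity
\[
f(x) = \int_0^\infty \chi_{\{f>s\}}(x)\d s - \int_0^\infty \chi_{\{-f>s\}}(x)\d s.
\]
Applying Fubini's theorem to the smooth 2-form $\d\omega$, and using $\int_{S^2}\d\omega = 0$ (Stokes on a manifold without boundary) to combine the two layer-cake pieces, I would rewrite
\[
\int_{S^2} f\d\omega = \int_{-\infty}^\infty \left(\int_{S^2} \chi_{\{f>s\}}\d\omega\right)\d s.
\]
Each inner integral is bounded in absolute value by $\|D\chi_{\{f>s\}}\|(S^2)$ via Lemma \ref{lem:BVprop}(i) applied to the characteristic function, and taking the supremum over admissible $\omega$ then invoking Lemma \ref{lem:BVprop}(i) once more for $f$ itself yields the desired inequality.

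For the reverse direction, I would select Lipschitz approximations $f_\e \in \Lip(S^2)$ with $f_\e \to f$ in $L^1(S^2)$ and $\int_{S^2}|Df_\e|\d\H^2 \to \|Df\|(S^2)$, as provided by the definition of $\|Df\|(S^2)$. The classical coarea formula for Lipschitz maps on smooth Riemannian manifolds (derivable from the Euclidean version via a finite atlas and subordinate partition of unity) gives
\[
\int_{S^2}|Df_\e|\d\H^2 = \int_{-\infty}^\infty \H^1(\{f_\e = s\})\d s.
\]
After a preliminary mollification of each $f_\e$ (which does not alter the limit of the total variation), Sard's theorem ensures that for a.e. $s$ the set $\{f_\e > s\}$ has smooth boundary $\{f_\e = s\}$, so that Lemma \ref{lem:BVprop}(ii) identifies $\H^1(\{f_\e = s\}) = \|D\chi_{\{f_\e>s\}}\|(S^2)$. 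Passing to a subsequence along which $\chi_{\{f_\e>s\}} \to \chi_{\{f>s\}}$ in $L^1(S^2)$ for a.e. $s$, the lower semicontinuity of $\|D\cdot\|(S^2)$ under $L^1$-convergence (immediate from the dual definition in Lemma \ref{lem:BVprop}(i)) combined with Fatou's lemma delivers
\[
\int_{-\infty}^\infty \|D\chi_{\{f>s\}}\|(S^2)\d s \leq \liminf_{\e\to 0}\int_{-\infty}^\infty \|D\chi_{\{f_\e>s\}}\|(S^2)\d s = \|Df\|(S^2).
\]

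The main technical obstacle is extracting a single subsequence along which $\chi_{\{f_\e>s\}} \to \chi_{\{f>s\}}$ in $L^1(S^2)$ holds for almost every $s \in \R$. This is handled by a slicing argument: Fubini applied to the bound
\[
\int_\R \|\chi_{\{f_\e>s\}} - \chi_{\{f>s\}}\|_{L^1(S^2)}\d s \leq \|f_\e - f\|_{L^1(S^2)} \to 0
\]
(whose verification uses a pointwise comparison based on the intermediate value between $f_\e(x)$ and $f(x)$) shows that the integrand converges to $0$ in $L^1(\R)$, so a diagonal subsequence yields a.e. pointwise convergence in $s$. The remaining routine step is the reduction of the Riemannian Lipschitz coarea formula to its Euclidean counterpart in a way compatible with the intrinsic norm $\|\cdot\|_{0,1}$ on one-forms.
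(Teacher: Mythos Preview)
The paper does not prove this lemma at all: it is stated with a citation to \cite{miranda2003functions} and the accompanying sentence ``where it is proved for much more general measure spaces,'' with no argument given in the text. So there is nothing in the paper to compare your proof against.

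Your argument is a correct, self-contained proof along standard lines for the $BV$ coarea formula, specialized to $S^2$. The ``$\leq$'' direction via layer-cake, Fubini, and the duality of Lemma~\ref{lem:BVprop}(i) is clean; note that your slicing identity in the reverse direction is in fact an equality, $\int_\R \|\chi_{\{f_\e>s\}}-\chi_{\{f>s\}}\|_{L^1(S^2)}\,\d s = \|f_\e-f\|_{L^1(S^2)}$, which is exactly the pointwise layer-cake formula integrated over $S^2$. The only places where a referee might ask for a word more are: (a) in the first direction, the application of Fubini requires the absolute integrability of $(x,s)\mapsto \chi_{\{f>s\}}(x)\,\d\omega(x)$ over $S^2\times(0,\infty)$, which follows from $f^+\in L^1$ and boundedness of $\d\omega$; and (b) the mollification step on $S^2$ should be done so that $\int_{S^2}|Df_\e^\delta|\to\int_{S^2}|Df_\e|$, which holds e.g.\ by heat-kernel smoothing or a partition-of-unity mollification yielding $W^{1,1}$ convergence. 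Both are routine. In short: the paper defers to the literature, while you have supplied an honest proof.
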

Next, we claim that for $ u\in C^1(B_R;S^2)$, the Brouwer degree
$\deg(\cdot,B_R, u)$ is in $BV(S^2)$. Indeed, we first note that
$\H^2( u(\partial B_R))=0$ and hence $\deg(\cdot, B_R, u)$ is defined
$\H^2$-almost everywhere on $S^2$ and from \eqref{eq:51}, one can easily derive
that it is in $L^1(S^2)$.
Using \eqref{eq:57},
\[
\begin{split}
  \int_{S^2}\deg(\cdot,B_\rho, u)\d \omega=&
  \int_{B_\rho} u^*\d \omega\\
  =&\int_{B_\rho}\d ( u^* \omega)\\
  =&\int_{\partial B_\rho} u^* \omega\,,
\end{split}
\]
where, in the second equality, we have use the fact that pull-back and exterior
derivative commute, and in the last equality, we have used Stokes' Theorem.
Hence, by Lemma \ref{lem:BVprop} (i), we have
\begin{equation}
\|D(\deg(\cdot,B_\rho, u))\|(S^2)=\int_{\partial B_\rho}|D u|\d \H^1\quad \text{ for all } u\in C^1(\overline B_\rho;S^2)\,.\label{eq:49}
\end{equation}

\subsection{An isoperimetric inequality for { $\Z$-valued functions
    on $S^2$}}
\label{sec:an-isop-ineq-1}
As a last  piece of notation before we make the main statement of the present
section, we introduce the metric $\di$  on the real numbers given by
\[
\di(x)=\dist(x,4\pi\Z)\,.
\]

\begin{lemma}
\label{lem:H1lb}
Let $B_R\subset\R^2$, $\nu\in C^1(\overline{B_R};S^2)$ and 
\[
\K=\int_{S^2}\deg(x,B_R,\nu)\d\H^2(x)\,.
\] 
Then
\begin{equation}
\label{eq:21}
\int_{\partial B_R}|D\nu|\d\H^1\geq F(\di(\K))\,.
\end{equation}
\end{lemma}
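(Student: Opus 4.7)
The plan is to reduce the inequality to a one-dimensional statement about sums of values in $[0,4\pi]$ via the $BV(S^2)$ coarea formula, and then establish that reduced statement by combining the isoperimetric inequality on $S^2$ with a subadditivity property of $F\circ\di$.

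Set $d:=\deg(\cdot,B_R,\nu)\colon S^2\to\Z$. The discussion preceding the lemma shows $d\in BV(S^2)$, and \eqref{eq:49} yields $\int_{\partial B_R}|D\nu|\,\d\H^1=\|Dd\|(S^2)$. Since $\nu\in C^1(\overline{B_R};S^2)$, $d$ is bounded and takes only finitely many values $k_1<\dots<k_J$, with masses $m_j:=\H^2(\{d=k_j\})$ summing to $4\pi$, and $\K=\sum_j k_jm_j$. Writing $E_n=\{d>n\}$ and $a_n=\H^2(E_n)$, the coarea formula (Lemma \ref{lem:coarea}) applied to the integer-valued $d$ gives
\[
\|Dd\|(S^2)=\sum_{n\in\Z}\|D\chi_{E_n}\|(S^2),
\]
and the isoperimetric inequality (Theorem \ref{thm:isosphere}, extended to sets of finite perimeter by approximation) yields $\|D\chi_{E_n}\|(S^2)\ge F(a_n)$. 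Thus it suffices to prove $\sum_n F(a_n)\ge F(\di(\K))$.

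A direct Fubini-style calculation gives $\sum_{n=k_1}^{k_J-1}a_n=\sum_j(k_j-k_1)m_j=\K-4\pi k_1\equiv\K\pmod{4\pi}$, while for $n$ outside $[k_1,k_J-1]$ we have $a_n\in\{0,4\pi\}$ and hence $F(a_n)=0$. So the proof reduces to the following subadditivity claim: for all $x_1,\dots,x_N\in[0,4\pi]$,
\[
F\Bigl(\di\Bigl(\textstyle\sum_i x_i\Bigr)\Bigr)\le\sum_i F(x_i).
\]

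I would prove this by induction on $N$, reducing to the two-term case. Since $F$ is symmetric about $2\pi$ on $[0,4\pi]$, one has $F(x)=F(\di(x))$; and $\di$ obeys the triangle inequality on $\R/4\pi\Z$ with $\di\le 2\pi$ always. Writing $a=\di(X),b=\di(y)\in[0,2\pi]$ where $X=\sum_{i<N}x_i$ and $y=x_N$, we get $\di(X+y)\le\min(a+b,2\pi)$, so by monotonicity of $F$ on $[0,2\pi]$ it suffices to prove $F(\min(a+b,2\pi))\le F(a)+F(b)$. If $a+b\le 2\pi$, concavity of $F$ with $F(0)=0$ gives the standard subadditivity $F(a+b)\le F(a)+F(b)$. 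If $a+b>2\pi$, set $s=a+b$ and observe that $a\mapsto F(a)+F(s-a)$ is concave on $[s-2\pi,2\pi]$, so its minimum is attained at an endpoint, where the value is $F(2\pi)+F(s-2\pi)=2\pi+F(s-2\pi)\ge 2\pi=F(\min(a+b,2\pi))$. The main obstacle is precisely this subadditivity claim, especially the $a+b>2\pi$ case which requires exploiting both the explicit value $F(2\pi)=2\pi$ and the concavity of $F$. A secondary technical point is extending the isoperimetric inequality from open sets to the (a priori non-open) level sets $E_n$ of $d$, which follows from the $BV$ structure of $d$ by standard approximation.
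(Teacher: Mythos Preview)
Your approach is essentially the paper's: both decompose $\|Dd\|(S^2)$ via the coarea formula into a sum over superlevel sets, apply the spherical isoperimetric inequality term by term, and then conclude by showing that $F\circ\di$ is subadditive. The paper compresses this last step into the single phrase ``by the concavity of $F$'' after invoking the triangle inequality for $\di$; your two-case analysis (splitting at $a+b=2\pi$ and using the endpoint argument for the concave function $a\mapsto F(a)+F(s-a)$) is a genuine and correct justification of that step, so in this respect your write-up is more complete than the paper's.

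One point needs fixing: the claim that $d=\deg(\cdot,B_R,\nu)$ is bounded and takes finitely many values is not justified by $\nu\in C^1(\overline{B_R};S^2)$ alone. The complement of the $C^1$ curve $\nu(\partial B_R)$ in $S^2$ can have infinitely many components, and there is no a~priori $L^\infty$ bound on $d$ from $d\in BV(S^2)$. Fortunately this assumption is inessential. One option is to argue exactly as the paper does, writing $\K=\sum_{j\ge 0}\H^2(A_j)-\sum_{j<0}\H^2(S^2\setminus A_j)$ (an absolutely convergent sum since $d\in L^1(S^2)$) and applying your subadditivity inequality to countably many summands; this is harmless because the left-hand side $F(\di(\K))$ is bounded by $2\pi$, so once the right-hand side exceeds $2\pi$ there is nothing to prove, and otherwise only finitely many terms are nonzero. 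Alternatively, your identity $\sum_{n\ge k_1}a_n=\K-4\pi k_1$ survives for $k_1:=\operatorname*{ess\,inf} d>-\infty$ (which follows from $d\in L^1$ and integer-valued on a finite-measure space) with the sum now possibly infinite but absolutely convergent.
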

\begin{proof}
We set  
\[
A_s:=\{x\in S^2:\deg(x,B_R,\nu)\geq s\}\,,
\]
and note that $A_s$ is open and has Lipschitz boundary.
Combining Lemmas \ref{lem:BVprop} (ii) and \ref{lem:coarea} with equation  \eqref{eq:49}, we get
\[
\begin{split}
  \int_{\partial B_R}|D\nu|\d \H^1=&\int_{-\infty}^\infty \H^1(\partial A_s)\d s\\
  =&\sum_{j=-\infty}^\infty \H^1(\partial A_j)\,.
\end{split}
\]
By Theorem \ref{thm:isosphere}, we get
\begin{equation}
\int_{\partial B_R}|D\nu|\geq \sum_{j=-\infty}^\infty F(\mathcal H^2(A_j))
\label{eq:24}
\end{equation}
Now
\begin{equation}
\label{eq:22}
\begin{split}
\K=&
\int_{S^2}\deg(x,B_R,\nu)\d\H^2(x)\\
=& \sum_{j\geq 0} \H^2(A_{ j})-\sum_{j<0}\H^2(S^2\setminus A_j)\,,
\end{split}
\end{equation}
{
where the right hand side is an absolutely convergent sum by
$\deg(\cdot,B_R,\nu)\in L^1(S^2)$.
By the triangle inequality for $\di$,
\[
\begin{split}
  \di(\K)\leq &\sum_{j\geq 0}\di(\H^2(A_{ j}))+\sum_{j<0}\di(-\H^2(S^2\setminus
  A_j))\\
  =& \sum_{j\in \Z}\di(\H^2(A_{ j}))\,.
\end{split}
\]
}
By the
concavity of $F$, we obtain
\[
F\circ\di(\K)\leq \sum_{j\in\Z}F\circ\di(\H^2(A_{ j}))\,.
\]
Since $\H^2(A_j)\in[0,4\pi]$ for all $j$ and
$F=F\circ\di$ on $[0,4\pi]$, this can be rewritten as
\[
F(\di(\K))\leq \sum_{j\in\Z}F(\H^2(A_{ j}))\,.
\]
Combining this last inequality with \eqref{eq:24} proves the claim of the lemma.
\end{proof}
\section{Proof of the main Theorem}
\label{sec:proof-main-theorem}
\begin{proof}[Proof of Theorem \ref{thm:mainthm}]
The upper bound has been proved in Lemma \ref{lem:upperbound}. Let $y$ satisfy
the upper bound.  For any $R\in[C_1 h|\log h|^{1/2},1-2h-C_1h|\log h|^{1/2}]$, we have 
\[
\int_{2h}^{2h+R} \d
\rho\left|\K(B_\rho)-2\pi(1-m_0)\right|\leq C
h^{1/2}R^{1/2}|\log h|^{3/4}
\]
by Proposition \ref{prop:kappaL1}.
We set $\tilde F=F\circ \dist_{4\pi\Z}$ with $F$ as in Lemma \ref{lem:H1lb}. Note that  on
$I_{m_0}:=(\frac{1-m_0}{2},1-\frac{m_0}{2})$,  $\tilde F$ is Lipschitz with a
Lipschitz constant that only depends on $m_0$. Moreover, $\tilde F$ is bounded
by $4\pi$. Hence,
\[
\left|\left(\tilde F(x)\right)^2-\left(\tilde F(2\pi(1-m_0))\right)^2\right|\leq 
\begin{cases} C(m_0)|x- 2\pi(1-m_0)| & \text{ if }x \in
  I_{m_0}\\
16\pi^2 &\text{ else}\,.\end{cases}
\]
This implies

\begin{equation}
\begin{split}
  \int_{2h}^{2h+R}\d\rho \left|\left(\tilde
      F(\K(B_\rho))\right)^2-\left(\tilde F(2\pi(1-m_0))\right)^2\right|\leq & C
  \int_{2h}^{2h+R} \d
  \rho\left|\K(B_\rho)-2\pi(1-m_0)\right|\\
  \leq& C h^{1/2}R^{1/2}|\log h|^{3/4}\,.
\end{split}\label{eq:55}
\end{equation}
Now we may estimate the bending term, first using Jensen's inequality:
\[
\int_{B_1} \d x|D\nu|^2\geq 2\pi\int\rho\d \rho\left(\frac{\int_{\partial
      B_\rho}|D\nu(x)|\d\H^1(x)}{2\pi\rho}\right)^2\,.
\]
On the right hand side, we may apply Lemma \ref{lem:H1lb}, and obtain
\begin{equation}
\int_{B_1} \d x|D\nu|^2\geq \frac{1}{2\pi}\int_{0}^{1}\frac{\d
  \rho}{\rho}\tilde F(\K(B_\rho))^2\,.\label{eq:54}
\end{equation}
We set $h_1=2C_1h|\log h|^{3/2}$, and choose $J\in\N$ such that
\[
2^Jh_1\leq 1-C_1h|\log h|^{1/2}<2^{J+1}h_1\,.
\]
Note that this choice implies in particular
\[
\log 2^J\geq |\log h|-\frac{3}{2}\log|\log h|-C\,.
\]
Using this in \eqref{eq:54}, we get
\begin{equation}
\begin{split}
\int_{B_1} \d x|D\nu|^2\geq &\frac{1}{2\pi}\int_{h_1}^{2^Jh_1}\frac{\d
  \rho}{\rho}\tilde F(\K(B_\rho))^2\\
\geq & 
 C^*\log 2^J-\frac{1}{2\pi}\int_{h_1}^{2^Jh_1}\frac{\d \rho}{\rho}\left|\tilde
  F(\K(B_\rho))^2-\tilde F(2\pi(1-m_0))^2\right|\\
\geq &C^*\left(|\log h|-\frac32\log|\log h|-C\right)-\text{error term}\,.
\end{split}\label{eq:6}
\end{equation}
Letting $R_j=2^j h_1$,  the error term on the right hand side can be estimated
using \eqref{eq:55},
\[
\begin{split}
  \int_{h_1}^{2^J h_1}\frac{\d \rho}{\rho}\left|\tilde F(\K(B_\rho))^2-\tilde
    F(2\pi(1-m_0))^2\right|\, \leq & \sum_{j=0}^J \int_{R_j}^{2R_j}\frac{\d
    \rho}{\rho}\left|\tilde
    F(\K(B_\rho))^2-\tilde F(2\pi(1-m_0))^2\right|\,\\
  \leq & \sum_{j=0}^J  R_j^{-1}Ch^{1/2}(2R_j)^{1/2}|\log h|^{3/4}\\
  \leq & C\,,
\end{split}
\]
where the last estimate is just the summation of a geometric series. This proves the theorem.
\end{proof}

\begin{remark}
\label{rem:hrem}
It is apparent from the  proof that we would have been able to prove the same lower bound (with  different coefficients in front of the lower order terms) if instead of working with the membrane term $\|g-g_0\|_{L^\infty(B_1\setminus B_h)}^2$, we had worked with $\|g-g_0\|_{L^\infty(B_1\setminus B_{\tilde h})}^2$, where $\tilde h=C h|\log h|^p$ for some $p>0$. 
\end{remark}

\pagebreak
\bibliographystyle{plain}
\bibliography{regular}

\end{document}